\numberwithin{equation}{section}
\newtheorem{theorem}{Theorem}[section]
\newtheorem{lemma}[theorem]{Lemma}
\newtheorem{proposition}[theorem]{Proposition}
\newtheorem{remark}[theorem]{Remark}
\def\eps{\varepsilon }
\newcommand{\RR}{\mathbb{R}}
\newcommand{\CC}{\mathbb{C}}
\newcommand{\cA}{\mathcal{A}}
\def\beq{\begin{equation}}
\def\eeq{\end{equation}}
\def\bb1{{1\!\!1}}
\newcommand{\pa}{{\partial}}
\def\cA{\mathcal{A}}
\def\cit{{\Bbb C}}
\def\eps{\varepsilon}
\def\OS{\mathrm{Orr}}
\def\Ray{\mathrm{Ray}}
\def\Airy{\mathrm{Airy}}
\begin{document}

\title{Green function for linearized Navier-Stokes around a boundary shear layer profile for long wavelengths} 

\author{Emmanuel Grenier\footnotemark[1]   \and Toan T. Nguyen\footnotemark[2]
}

\maketitle

\renewcommand{\thefootnote}{\fnsymbol{footnote}}

\footnotetext[1]{Equipe Projet Inria NUMED,
 INRIA Rh\^one Alpes, Unit\'e de Math\'ematiques Pures et Appliqu\'ees., 
 UMR 5669, CNRS et \'Ecole Normale Sup\'erieure de Lyon,
               46, all\'ee d'Italie, 69364 Lyon Cedex 07, France. Email: Emmanuel.Grenier@ens-lyon.fr}

\footnotetext[2]{Department of Mathematics, Penn State University, State College, PA 16803. Email: nguyen@math.psu.edu. TN's research was supported by the NSF under grant DMS-1764119 and an AMS Centennial Fellowship.}



\begin{abstract}

This paper is the continuation of a program, initiated in \cite{GrN1,GrN2}, 
to derive pointwise estimates on the Green function of Orr Sommerfeld equations.
In this paper we focus on long wavelength perturbations, more precisely
horizontal wavenumbers $\alpha$ of order $\nu^{1/4}$, which correspond to the lower boundary of the instability area
for monotonic profiles.

\end{abstract}



\section{Introduction}


We are interested in the study of linearized Navier Stokes around a 
given fixed profile $U_s = (U(z),0)$ in the inviscid limit $\nu \to 0$. 
Namely, we consider the following set of equations
\beq \label{Nlin1}
\partial_t v + U_s \cdot \nabla v + v\cdot  \nabla U_s + \nabla p -  \nu \Delta v = 0,
\eeq
\beq \label{Nlin2}
\nabla\cdot  v = 0,
\eeq
where $0<\nu \ll1$, posed on the half plane $x \in \RR$, $z > 0$,
with the no-slip boundary conditions
\beq \label{Nlin3}
v = 0 \quad \hbox{on} \quad z = 0 .
\eeq

The linear problem \eqref{Nlin1}-\eqref{Nlin3} is a very classical problem 
that has led to a huge physical and mathematical literature, focussing in particular on the linear stability, 
on the dispersion relation, on the study of eigenvalues and eigenmodes, and on the onset of nonlinear instabilities and turbulence \cite{Reid,Schlichting}. We also mention several efforts in proving linear to nonlinear stability and instability around shear flows in the small viscosity limit \cite{GR08,DGV2,Gr1,GGN1,GrN5}.   

Throughout this paper, we will assume that $U(z)$ is holomorphic near $z = 0$, that $U(0) = 0$, that $U'(0) > 0$, that $U(z) > 0$ 
for any $z > 0$, and that $U$ converges
exponentially fast at $\infty$, to some positive constant $U_+$
$$
0 < U_+ = \lim_{z\to \infty} U(z) < \infty,
$$
as well as all its derivatives (which converge to $0$). Note in particular that this class of profiles includes for instance the exponential profile
$$
U(z) = U_+ (1 - e^{- \beta z})
$$
where $\beta > 0$. As such a profile has no inflection point, according to Rayleigh's inflection criterium, 
it is stable with respect to linearized Euler equations.
However, strikingly, a small viscosity has a destabilizing effect. 
That is, {\em all such shear profiles are unstable for large enough Reynolds numbers $\nu^{-1}$} \cite{GGN2,GGN3}. 

More precisely, for such shear flows there exist lower and upper marginal stability branches 
$\alpha_\mathrm{low}(\nu)\sim \nu^{1/4}$ and  $\alpha_\mathrm{up}(\nu)\sim \nu^{1/6}$,  
so that whenever the horizontal wave number $\alpha$ belongs to $[\alpha_\mathrm{low}(\nu),\alpha_\mathrm{up}(\nu)]$, 
the linearized Navier-Stokes equations about this shear profile have an eigenfunction and a corresponding eigenvalue $\lambda_\nu$ 
with 
\begin{equation}\label{growthrate0} 
\Re \lambda_\nu \sim \nu^{1/2} .
\end{equation}
 Heisenberg \cite{Hei,HeiICM}, then Tollmien and C. C. Lin \cite{Lin0,LinBook} 
were among the first physicists to use asymptotic expansions to study this spectral instability. We refer to
Drazin and Reid \cite{Reid} and Schlichting \cite{Schlichting} for a complete account of the physical literature on the subject, 
and to \cite{GGN2,GGN3} for a complete mathematical proof of this instability. 

To study the linear stability of $U_s$ we first introduce the vorticity of the perturbation
$$
\omega = \nabla \times v = \partial_z v_1 - \partial_x v_2,
$$ 
which leads to
\begin{equation}\label{EE-vort}
 (\partial_t + U\partial_x) \omega + v_2 U'' - \nu \Delta \omega = 0
\end{equation}
together with $v = \nabla^\perp \phi$ and $\Delta \phi = \omega$, where $\phi$ is the related stream function. 
The no-slip boundary condition \eqref{Nlin3} becomes $\phi = \partial_z \phi =0$ on $\{z=0\}$. 

We then take the Fourier transform in the tangential variables with Fourier variable $\alpha$ and the Laplace
transform in time with dual variable $-i \alpha c$, following the traditional notations. In other words we study
solutions of linearized Navier Stokes equations which are of the form
$$
v = \nabla^{\perp} \Bigl( e^{i \alpha (x - c t)} \phi_\alpha(z) \Bigr) 
$$
This leads to the 
classical Orr-Sommerfeld equation, 
\beq \label{OS}
\OS_{\alpha,c}(\phi_\alpha) := 
- \eps \Delta_\alpha^2 \phi_\alpha + (U - c) \Delta_\alpha \phi_\alpha - U'' \phi_\alpha = 0
\eeq
where
$$
\eps = {\nu \over i \alpha},
$$
together with the boundary conditions
\beq \label{OS2}
{\phi_\alpha}_{\vert_{z=0}} = \partial_z{ \phi_\alpha}_{\vert_{z=0}} = 0, \qquad \lim_{z\to \infty}\phi_\alpha(z) =0,
\eeq
and where
$$
\Delta_\alpha = \partial_z^2 - \alpha^2 .
$$

The aim of this paper is to give bounds on the Green function of the Orr Sommerfeld equation 
when $\alpha$ is of order  $\nu^{1/4}$ and $c$ is of the same order, which corresponds to one of the boundaries
of the instability area. This restricted study appears to be sufficient to construct linear and nonlinear instabilities 
for the full nonlinear Navier Stokes equations \cite{GrN1,GrN5}. 

To construct the Green function we first construct 
two approximate solutions $\phi_{s,\pm}^{app}$ with a "slow behavior", and two
approximate solutions $\phi_{f,\pm}^{app}$ with a "fast behavior" (the "-" solutions going to $0$ as $z$ goes to $+ \infty$).
These approximate solutions (and in fact exact solutions) have already been constructed in \cite{GGN3}. In this paper
we propose a much simplified and much shorter construction of these approximate solutions, making the current paper
self contained.

The slow approximate solutions will be solutions of the Rayleigh equation
\beq \label{Rayleigh}
(U - c) \Delta_\alpha \phi - U'' \phi = 0
\eeq
with boundary condition $\phi(0) = 0$. They will be constructed by perturbation of the case
$\alpha = 0$ where the Rayleigh equation degenerates in
\beq \label{limitRay}
Ray_0(\phi) = (U - c) \partial_z^2 \phi - U'' \phi .
\eeq
The main observation is that $\phi_{1,0} = U - c$ is a particular of (\ref{limitRay}). 
Let $\phi_{2,0}$ be the other solution of this equation such that the Wronskian $W[\phi_{1,0},\phi_{2,0}]$ equals $1$. 
We will construct approximate solutions to the Orr Sommerfeld equation which satisfy
\beq \label{defiEvans1}
\phi_{s,-}^{app}(0) = U(0) - c + \alpha U_+^2 \phi_{2,0}(0) + O(\alpha^2), 
\eeq
\beq \label{defiEvans2}
\partial_z \phi_{s,-}^{app}(0) = U'(0) + O(\alpha).
\eeq
The "fast approximate solutions" will emerge in the balance between $- \eps \Delta_\alpha^2 \phi$
and $(U - c) \Delta_\alpha \phi$. Keeping in mind that $\alpha$ is small, they will be constructed starting
from solutions of the simplified equation
$$
- \eps \partial_z^4 \phi + (U - c) \partial_z^2 \phi = 0.
$$
As $c$ is small, and as $U'(0) \ne 0$, there exists a unique $z_c \in \cit$ near $0$ such that
\beq \label{critic1}
U(z_c) = c .
\eeq
Such a $z_c$ is called a "critical layer" in the physics literature.
It turns out that all the instability is driven by what happens near this critical layer.
Near $z_c$, equation (\ref{critic1}) is a perturbation of the Airy equation
\beq \label{Airy}
- \eps \partial_z^2 \psi + U'(0) (z - z_c) \psi = 0
\eeq
posed on $\psi = \partial_z^2 \phi$. The fast approximate solutions are thus constructed as perturbations
of second primitives of classical Airy functions. This construction will be detailed in Section \ref{sectionAiry}, where we will
construct two approximate solutions $\phi_{f,\pm}^{app}$ to Orr Sommerfeld equation, with a fast behavior and with
\beq \label{valuephi}
\phi_{f,-}^{app}(0) = Ai(2,- \gamma z_c)) + O(\nu^{1/4})
\eeq
\beq \label{valuephid}
\partial_z \phi_{f,-}^{app}(0) = \gamma Ai(1,- \gamma z_c) + O(1),
\eeq
where
\beq \label{valuegamma}
\gamma = \Bigl(  {i \alpha U'(z_c) \over \nu} \Bigr)^{1/3} = O(\nu^{-1/4}),
\eeq
and where $Ai(1,.)$ and $Ai(2,.)$ are the first and the second primitives of the classical Airy function $Ai$.
We now introduce the Tietjens function, defined by
$$
Ti(z) = {Ai(1,z) \over Ai(2,z)} .
$$
Tietjens function is a classical special function in physics, precisely known and tabulated.
Then
\beq \label{Tiet}
{\partial_z \phi_{f,-}^{app}(0) \over \phi_{f,-}^{app}(0)} = \gamma Ti(- \gamma z_c) + O(1) .
\eeq
In this paper we will bound the Green function of Orr Sommerfeld equations.
More precisely, for each fixed $\alpha \in \RR_+$ and $c\in \CC$, 
we let $G_{\alpha,c}(x,z)$ be the corresponding Green kernel of the Orr Sommerfeld problem. 
By definition, for each $x\in \RR$ and $c\in \CC$, $G_{\alpha,c}(x,z)$ solves 
$$ 
\OS_{\alpha,c}(G_{\alpha,c} (x,\cdot)) = \delta_x (\cdot)
$$
on $z\ge 0$, together with the boundary conditions:
$$
G_{\alpha,c}(x,0) = \partial_z G_{\alpha,c} (x,0) =0, \qquad \lim_{z\to \infty} G_{\alpha,c}(x,z) =0.
$$
That is, for $z\not = x$, the Green function $G_{\alpha,c} (x,z)$ solves the homogenous Orr-Sommerfeld equations, 
together with the following jump conditions across $z=x$:
$$ 
[\partial_z^kG_{\alpha,c}(x,z)]_{\vert_{z=x}} = 0, \qquad [\epsilon\partial_z^3 G_{\alpha,c}(x,z)]_{\vert_{z=x}} = -1
$$ 
for $k=0,1,2$. 
Here, the jump $[f(z)]_{\vert_{z=x}}$ across $z=x$ is defined to be 
the value of the right limit subtracted by that of the left limit as $z\to x$.  

Let
$$
\mu_f(z) = \sqrt{ { U(z) - c \over \eps}},
$$
taking the square root with a positive real part.
Note that
\beq \label{muf}
| \mu_f(z) | \ge \Bigl| \sqrt{ { \Im c \over \eps} } \Bigr| 
= \Bigl| \sqrt{ \alpha \, \Im c \over \nu} \Bigr| = O(\nu^{-1/4}).
\eeq
The  main result in this paper is as follows. 

\begin{theorem}\label{theo-GreenOS-stable} 
Let $\sigma_0$ be arbitrarily small.
Let $\alpha = O(\nu^{1/4})$, and let $c = O(\nu^{1/4})$, with $| \Im c | \ge \sigma_0 \nu^{1/4}$, 
such that
\beq \label{disp}
| W[ \phi_{s,-}^{app}, \phi_{f,-}^{app} ]| \ge \sigma_0.
\eeq
Let $G_{\alpha,c}(x,z)$ be  the Green function of the Orr-Sommerfeld problem. 
Then, there exists a smooth function $P(x)$ and there are universal positive constants $\theta_0, C_0$ so that 
\begin{equation}\label{est-GrOS-stable}
\begin{aligned}
 |G_{\alpha,c}(x,z) -  {P(x) \phi_{s,-}(z) \over \nu^{1/4}} |  &\le 
  {C_0 \over \eps \mu_f^2(x)} \Big( e^{-\theta_0  | \alpha | |x-z|} + 
 {1 \over | \mu_f(x) |} e^{ - \theta_0 | \int_x^z \Re \mu_f(y) dy | } \Big)
\end{aligned}
\end{equation}
uniformly for all $x,z\ge 0$. 
Similarly, 
\begin{equation}\label{est-GrOS-stable2}
\begin{aligned}
 | \partial_z G_{\alpha,c}(x,z)  - { P(x) \partial_z \phi_{s,-}(z) \over \nu^{1/4} }  |  &\le 
  {C_0 \over \eps \mu_f^2(x)} \Big( e^{-\theta_0  |\alpha| |x-z|} + 
 {| \mu_f(z) | \over | \mu_f(x) |} e^{ - \theta_0  | \int_x^z \Re \mu_f(y) dy | } \Big)
\end{aligned}
\end{equation}
 \end{theorem}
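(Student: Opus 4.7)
The plan is to build an explicit approximate Green function $G^{app}(x,z)$ out of the four approximate solutions $\phi_{s,\pm}^{app}, \phi_{f,\pm}^{app}$ recalled above, then upgrade it to the exact Green function by a fixed-point iteration controlled by the residual $R^{app} := \OS_{\alpha,c}(G^{app}) - \delta_x$. First I would write $G^{app}(x,\cdot)$ piecewise on the two half-lines $\{0 \le z \le x\}$ and $\{z \ge x\}$: on $\{z \ge x\}$ only the two decaying modes $\phi_{s,-}^{app}$ and $\phi_{f,-}^{app}$ may appear, while on $\{0 \le z \le x\}$ a combination of all four approximate solutions is needed so that the no-slip conditions at $z=0$ can be enforced. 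The resulting six coefficients are then fixed by the two boundary conditions at $z=0$, the three continuity conditions, and the jump $\eps[\partial_z^3 G]=-1$ at $z=x$. Cramer's rule writes each coefficient as a ratio of Wronskian-type determinants; the only determinant that could vanish is, up to harmless factors, precisely $W[\phi_{s,-}^{app},\phi_{f,-}^{app}]$, which the hypothesis \eqref{disp} keeps uniformly bounded below.

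Reading off the resulting explicit formula, I expect the coefficient of $\phi_{s,-}$ in $G^{app}$ to take the form $P(x)/\nu^{1/4}$ with $P(x)$ smooth and bounded; the factor $\nu^{-1/4}$ is generated by the Airy scaling $\gamma = O(\nu^{-1/4})$ of \eqref{valuegamma}--\eqref{valuephid}, together with the $O(1)$ lower bound on $W[\phi_{s,-}^{app},\phi_{f,-}^{app}]$ and on $U'(0)$. This is the explicit leading term in \eqref{est-GrOS-stable}. All other pieces of $G^{app}$ are either smaller by a factor $1/|\mu_f|$ than this leading term, or else are localized inside a fast layer of thickness $1/|\mu_f|$ carried by $\phi_{f,\pm}^{app}$. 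Combining the slow bound $|\phi_{s,\pm}^{app}(z)| \lesssim e^{-\theta_0|\alpha| z}$ with the fast bound $|\phi_{f,-}^{app}(z)/\phi_{f,-}^{app}(x)| \lesssim e^{-\theta_0 |\int_x^z \Re \mu_f(y)\,dy|}$ then yields the two exponential envelopes on the right-hand side of \eqref{est-GrOS-stable}. Estimate \eqref{est-GrOS-stable2} follows from differentiating the same representation: since $\partial_z \phi_{f,-}^{app}$ carries an extra factor $\mu_f(z)$, the prefactor $1/|\mu_f(x)|$ in front of the fast exponential is upgraded to $|\mu_f(z)|/|\mu_f(x)|$, exactly as claimed.

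To pass from $G^{app}$ to the exact $G$, I set $G = G^{app} + G^{corr}$ and invert $\OS_{\alpha,c}$ against $R^{app}$ by Neumann iteration, using $G^{app}$ itself as parametrix. The residual is small because each $\phi_{\bullet,\pm}^{app}$ is only an approximate solution to Orr--Sommerfeld (with errors coming from Rayleigh truncation for the slow modes and Airy truncation for the fast modes, both quantified in the earlier sections); this smallness absorbs the $O(1)$ cost of convolving against $G^{app}$ in the weighted norm suggested by \eqref{est-GrOS-stable}, so the series converges and the bound transfers from $G^{app}$ to $G$. The main obstacle is the careful book-keeping of the $\mu_f$-powers throughout both the Cramer inversion and the iteration: since $\mu_f(z)$ varies with $z$, the values at $x$, at $z$ and the intermediate rates in $\int_x^z \Re\mu_f$ must be tracked separately, and one has to verify that each iterate really gains a power of $1/|\mu_f|$. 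This gain ultimately rests on the lower bound $\Re \mu_f \gtrsim \nu^{-1/4}$ provided by \eqref{muf} under the hypothesis $|\Im c| \ge \sigma_0 \nu^{1/4}$, which is precisely what makes the fast-layer integrals absolutely convergent and closes the iteration.
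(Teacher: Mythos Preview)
Your proposal is correct and follows essentially the same strategy as the paper: build an approximate Green function from the four approximate modes, solve the resulting linear system for the coefficients, read off the $\nu^{-1/4}P(x)\phi_{s,-}$ leading term from the Wronskian structure, and pass to the exact Green function by iteration as in \cite{GrN1}. The only organizational difference is that the paper splits $G^{app}=G_i^{app}+G_b^{app}$ and first inverts a $4\times 4$ jump--continuity system (via a block decomposition in which the key smallness is $CA^{-1}=O(\nu^{1/4})$, coming from the $\log(z-z_c)$ singularity of $\phi_{s,+}^{app}$), and only afterwards corrects the boundary with a $2\times 2$ system whose determinant is exactly $W[\phi_{s,-}^{app},\phi_{f,-}^{app}]/\phi_{f,-}^{app}(0)$; your single $6\times 6$ Cramer inversion is equivalent but will force you to confront that same critical-layer singularity of $\phi_{s,+}^{app}$ inside the cofactor expansion.
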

 
Let us comment (\ref{disp}). We have
$$
W[ \phi_{s,-}^{app}, \phi_{f,-}^{app} ]
= \gamma \psi_{s,0}^{app}(0) Ti(- \gamma z_c)  \phi_{f,-}^{app}(0)   - \partial_z \phi_{s,-}^{app}(0)  \phi_{f,-}^{app}(0) 
$$
$$
= - \Bigl( \gamma c Ti(- \gamma z_c) + U'(0) \Bigr) Ai(2,-\gamma z_c) + O(\nu^{1/4})
$$
Note that both terms under the brackets are of order $O(1)$, since $\gamma c$ is of order $O(1)$.
The Wronskian vanishes if there exists a linear combination of
$\phi_{s,-}^{app}$ and $\phi_{f,-}^{app}$ which satisfies the boundary conditions, namely if there exists an approximate
eigenmode of $\OS_{\alpha,c}$ (recalling that $\phi_{s,-}^{app}$ and $\phi_{f,-}^{app}$ are only approximate solutions of $\OS_{\alpha,c}$).
We have to remain away from such approximate modes, since nearby there exists true eigenmodes where $\OS_{\alpha,c}$
is no longer invertible. Note that $\sigma_1$ may be taken arbitrarily small.

Note that in this Theorem we are at a distance $O(\nu^{1/4})$ from a simple eigenmode $\psi_0$. It is therefore expected that
$\OS_{\alpha,c}$ is of order $O(\nu^{-1/4})$ and that
\beq \label{inverseOrr}
\OS^{-1}_{\alpha,c}(\psi) = \nu^{-1/4} \Bigl( \int_0^{ +\infty} P(z) \psi(z) dz \Bigr) \psi_0 + O(1) .
\eeq
As $\psi_0 = \phi_{s,-} + O(\nu^{1/4})$, $G_{\alpha,c}$ is only bounded
by $O(\nu^{-1/4})$, and its main component is $ \nu^{-1/4} P \phi_{s,-}$.


\section{The Airy operator \label{sectionAiry}}


In this section, we construct two approximate solutions of Orr Sommerfeld equation, called
$\phi_{f,\pm} = \phi_{f,\pm}^{app}$, with fast increasing or decreasing behaviors. For these approximate solutions, it turns out
that the zeroth order term $U'' \phi_{f,\pm}$ may be neglected. 
Moreover, as $\alpha$ is small, $\alpha^2$ terms may also be neglected.
This simplifies the Orr Sommerfeld operator in the so called modified Airy operator defined by
\begin{equation}\label{Airy-de}
\Airy = \cA \partial_z^2 ,
\end{equation}
where
\begin{equation}\label{def-cA}
\mathcal{A}: = - \eps \partial_z^2 + (U-c) .
\end{equation}
Note that
\beq \label{Airy-de-2}
\OS_{\alpha,c} = \Airy + \hbox{OrrAiry} 
\eeq
where
$$
\hbox{OrrAiry} = 2 \eps \alpha^2 \partial_z^2 - \eps \alpha^4  - \alpha^2 (U-c) - U'' .
$$
Note also that $U-c$ behaves like $U'(z_c) ( z - z_c)$ for $z$ near $z_c$, hence $\cA$ is very similar to the classical Airy
operator $\partial_z^2 - z$ when $z$ is close to $z_c$. The main difficulty lies in the fact that the "phase" $U(z) - c$ almost
vanishes when $z$ is close to $\Re z_c$, hence we have to distinguish between two cases: $z \le \sigma_1$ and
$z \ge \sigma_1$ for some small $\sigma_1$. 
The first case is handled through a Langer transformation, which reduces (\ref{Airy-de}) to the classical
Airy equation. The second case may be treated using a classical WKB expansion.

We will prove the following proposition. 

\begin{proposition} \label{prop-maphif}
There exist two smooth functions $\phi^{app}_{\pm}(z)$ such that
\beq \label{fastapp1}
|{\cal A} \partial_z^2 \phi^{app}_{\pm} | \le C \nu^N | \phi^{app}_{\pm} |,
\eeq
\beq  \label{fastapp1b}
| \OS_{\alpha,c}(\phi_{\pm}^{app}) | \le C | \phi^{app}_{\pm} |,
\eeq
for arbitrarily large $N$.
Moreover for $z \gg \nu^{1/4}$ and for $k =1,2,3$, as $\nu \to 0$,
\beq \label{fastapp1c}
{\partial_z^k \phi^{app}_-(z) \over \phi^{app}_-}(z) \sim (-1)^k \mu_f^k(z),
\eeq
and similarly for $\phi^{app}_+$ without the $(-1)^k$ factor.
For $k = 1, 2, 3$ and  any $x_1 < x_2$,
\beq \label{fastapp2}
\Bigl| {\phi^{app}_{+}(x_2) \over \phi^{app}_{+}(x_1)} \Bigr| \le C \exp \Bigl(
\int_{x_1}^{x_2} \Re \mu_f(y) dy \Bigr) 
\eeq
and similarly for $\phi^{app}_-$.
\end{proposition}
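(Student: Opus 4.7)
The plan is to obtain $\phi^{app}_\pm$ as double antiderivatives of two functions $\psi_\pm$ that solve the pure Airy equation $\mathcal{A}\psi = 0$ up to a residual of size $\nu^N$. With this choice $\mathcal{A}\partial_z^2 \phi^{app}_\pm = \mathcal{A}\psi_\pm$, so (2.1) is immediate as soon as $|\psi_\pm| \lesssim \nu^{-M}|\phi^{app}_\pm|$ for some fixed $M$; the WKB form below shows that two integrations essentially divide by $\mu_f^2$, so that $|\psi_\pm| \sim |\mu_f^2\phi^{app}_\pm|$, which is only polynomially large in $\nu^{-1}$. The Orr--Sommerfeld bound (2.2) then follows from the decomposition $\OS_{\alpha,c} = \Airy + \hbox{OrrAiry}$: with $\alpha = O(\nu^{1/4})$, $\epsilon\alpha^2 = O(\nu^{5/4})$, and $|\partial_z^2 \phi^{app}_\pm| \lesssim |\mu_f|^2|\phi^{app}_\pm| = |(U-c)/\epsilon|\,|\phi^{app}_\pm|$, each of the four terms $2\epsilon\alpha^2\partial_z^2$, $\epsilon\alpha^4$, $\alpha^2(U-c)$ and $U''$ in $\hbox{OrrAiry}$ contributes at most $C|\phi^{app}_\pm|$. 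Estimates (2.3) and (2.4) are then read off directly from the WKB form of $\psi_\pm$.

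The construction of $\psi_\pm$ is done in two overlapping zones. In the near zone $z \le \sigma_1$, where $U-c$ vanishes at the complex critical point $z_c$ with $|z_c| = O(\nu^{1/4})$, I would perform a Langer change of variable $X = X(z)$ defined by $\tfrac{2}{3}X(z)^{3/2} = \int_{z_c}^{z}\sqrt{(U(y)-c)/\epsilon}\,dy$, which is analytic through $z_c$ and satisfies $X^{1/2}X' = \mu_f$. A standard computation shows that $\mathcal{A}$ is conjugate, up to a smooth bounded multiplication error of size $\epsilon^{1/2}$, to the classical Airy operator $-\partial_X^2 + X$; the pull-backs of $\mathrm{Ai}$ and its Stokes companion give two approximate solutions $\psi_\pm^{\mathrm{near}}$ on $[0,\sigma_1]$ that I iterate against the Langer Green function to reach residual $O(\nu^N)$. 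In the far zone $z \ge \sigma_1/2$ the phase $U-c$ is bounded below, and I would use the WKB ansatz
\[
\psi_\pm^{\mathrm{far}}(z) = \mu_f(z)^{-1/2}\exp\Bigl(\pm\int_{\sigma_1}^{z}\mu_f(y)\,dy\Bigr)\bigl(1 + r_\pm(z)\bigr),
\]
where $r_\pm$ is built by Neumann iteration against the transport operator $\pm 2\mu_f\partial_z + \mu_f'$; each iterate gains a factor $\epsilon^{1/2}$, so $N$ iterations yield a residual of order $\nu^N\psi_\pm^{\mathrm{far}}$. The two zones are glued with a smooth cutoff $\chi$: the classical matching $\mathrm{Ai}(X) \sim X^{-1/4}e^{-\frac{2}{3}X^{3/2}}$ pulled back to $z$ coincides with the leading WKB form $\mu_f^{-1/2}e^{-\int \mu_f}$ up to a multiplicative constant, so the difference $\psi_\pm^{\mathrm{near}} - \psi_\pm^{\mathrm{far}}$ is of size $\nu^N$ on the overlap, and a final Neumann iteration on the half-line absorbs the cutoff commutator $[\mathcal{A},\chi]$.

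The main obstacle is the near zone. One must track the Langer change of variable along a complex contour through $z_c$, select the correct Airy companion on the appropriate Stokes sector so that the resulting $\phi^{app}_-$ decays like $\exp(-\int \Re\mu_f)$ while $\phi^{app}_+$ grows like the positive exponential for all real $z \ge 0$, and establish weighted bounds on the Langer Green function that are compatible with the exponential envelopes in (2.5). Once these are in place, smoothness of $\phi^{app}_\pm$ is inherited from the analyticity of the Langer transform and of the Airy functions, the ratio asymptotics (2.3) follow from the standard expansion of $\mathrm{Ai}$ for large argument, and the exponential bound (2.5) is a routine consequence of the WKB form together with the Langer matching.
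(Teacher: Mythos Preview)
Your proposal is correct and follows essentially the same strategy as the paper: build $\psi_\pm$ as approximate solutions of $\mathcal{A}\psi=0$ via Langer's transformation near the critical layer and a WKB expansion away from it, match the two regimes, and then integrate twice to obtain $\phi^{app}_\pm$, deducing (2.2) from the $\OS_{\alpha,c}=\Airy+\hbox{OrrAiry}$ decomposition. The only cosmetic difference is that the paper matches at $z=\sigma_1$ by choosing a linear combination $a\,\tilde{Ai}+b\,\tilde{Ci}$ so that value and first derivative agree with the WKB solution, whereas you glue with a smooth cutoff and absorb $[\mathcal{A},\chi]$ by a final iteration; both are standard and lead to the same conclusion.
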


To prove this proposition we construct $\psi^{app}_\pm = \partial_z^2 \phi^{app}_\pm$ for 
$z < z_c$ in Section \ref{Airypoints} using the Langer's transformation
introduced in (\ref{sec-Langer}) and for $z > z_c$ in Section \ref{sec-WKB} using the classical WKB method. We then match these
two constructions in Section \ref{sec-match}, integrate them twice in Section \ref{AA}
and detail the Green function of Airy operator in Section \ref{GreenAiry}.


\subsection{A primer on Langer's transformation}\label{sec-Langer}


The first step is to construct approximate solutions to ${\cal A} \psi = 0$, 
starting from solutions of the genuine Airy equation
$\eps \psi'' = y \psi$, thanks to the so called Langer's transformation that we will now detail.
Let $B(x)$ and $C(x)$ be two smooth functions. 
In $1931$,  Langer introduced the following method to build approximate solutions to the varying coefficient
Airy type equation
\beq \label{nonlinC}
- \eps \phi'' + C(x) \phi = 0
\eeq
starting from solutions to the similar Airy type equation
\beq \label{nonlinB}
- \eps \psi'' + B(x) \psi = 0.
\eeq
We assume that  both $B$ and $C$ vanish at some point $x_0$, and that their derivatives at $x_0$ does not vanish.
Let $\psi$ be any solution to (\ref{nonlinB}). Let $f$ and $g$ be two
smooth functions, to be chosen later. Then 
$$
\phi(x) = f(x) \psi(g(x))
$$ 
satisfies
$$
- \eps \phi'' + C(x) \phi = - \eps f'' \psi -  2 \eps f' \psi' g' - B(g(x))  (g')^2 f \psi - \eps f \psi' g'' +  C(x) f\psi.
$$
Note that $f$ may be seen as a modulation of amplitude and $g$ as a change of phase.
If we choose $g$ such that 
\beq \label{eqg}
B(g(x)) (g')^2 = C(x) 
\eeq
and $f$ such that
\beq \label{eqf}
2 f' g' + f g'' = 0,
\eeq
we have
$$
- \eps \phi'' + C(x) \phi = - \eps f'' \psi.
$$
Hence $\phi$ may be considered as an approximate solution to $- \eps \phi'' + C(x) \phi = 0$.

Note that (\ref{eqf}) may be solved, yielding
\beq \label{eqf1}
f(x) = {1 \over \sqrt{g'(x)}} .
\eeq
Now let $B_1$ be the primitive of $\sqrt{B}$ which vanishes at $x_0$
and let $C_1$ be the primitive of $\sqrt{C}$ which vanishes at $x_0$.
Then (\ref{eqg}) may be rewritten as
\beq \label{eqg2}
B_1(g(x))  = C_1(x).
\eeq
Note that both $B_1$ and $C_1$ behave like $C_0(x - x_0)^{3/2}$ near $x_0$. Hence (\ref{eqg2}) may be solved for
$x$ near $x_0$. This defines a smooth function $g$ which satisfies $g(x_0) = x_0$. Moreover if $B'(x_0) = C'(x_0)$
then $g'(x_0) = 1$.



\subsection{Airy critical points \label{Airypoints}}


In this section we  use Langer's transformation to construct approximate solutions to ${\cal A} \psi = 0$ starting
from solutions of the genuine Airy equation.

Let $c$ be of order $\nu^{1/4}$. Then there exists an unique $z_c \in \cit$ near $0$ such that $U(z_c) = c$. 
Note that $z_c$ is also of order
$\nu^{1/4}$ since $U'(0) \ne 0$. Expanding $U$ near $z_c$ at first order we get the approximate equation
\beq \label{order2l}
- \eps \partial_z^2 \psi + U'(z_c) (z - z_c) \psi = 0 
\eeq
which is the classical Airy equation. Let us assume that $\Re U'(z_c) > 0$, the opposite case being similar.
A first solution is given by  
$$
A(z) = Ai ( \gamma (z - z_c) )
$$
where $Ai$ is the classical Airy function, solution of $Ai'' = x Ai$, and where
$\eps \gamma^3 =  U'(z_c)$, namely
$$
\gamma = \Bigl(  {i \alpha U'(z_c) \over \nu} \Bigr)^{1/3} .
$$
Note that since $\alpha$ is of order $\nu^{1/4}$, $\gamma$ is of  order $\nu^{-1/4}$ and that 
$$
\arg(\gamma) =  + \pi / 6 + O(\nu^{-1/4}).
$$
Moreover, as $x$ goes to $\pm \infty$, with argument $i \pi / 6$,
$$
Ai(x) \sim {1 \over 2 \sqrt{\pi}}  { e^{- 2 x^{3/2}  / 3} \over x^{1/4}} .
$$
In particular, $Ai'(x) / Ai(x) \sim - x^{1/2}$ for large $x$. Hence, as $\gamma(z - z_c)$ goes to infinity, $A(z)$ goes to $0$ and 
\beq \label{asymplog}
{A'(z) \over A(z)} \sim - \gamma^{3/2} (z - z_c)^{1/2} = 
-  \Bigl(  {i \alpha U'(z_c) \over \nu} \Bigr)^{1/2} (z - z_c)^{1/2} \sim - \sqrt{B(z)},
\eeq
with 
$$
B(z) = \eps^{-1} U'(z_c) (z - z_c).
$$ 
More precisely, we get 
$$
{A'(z) \over A(z)}  = - \sqrt{B(z)} (1 + O(\nu^{1/4}))
$$
for $z \gg \nu^{1/4}$ (on which $\gamma (z-z_c) \gg1$). 

An independent solution is given by $Ci (\gamma (z - z_c))$ where
$$
Ci = - i \pi (Ai + i Bi),
$$
with $Bi(\cdot)$ being the other classical Airy function. In this case $| Ci (\gamma (z - z_c)) |$ goes to $+ \infty$ as $z - z_c$ goes to $+ \infty$,
with a plus instead of the minus in the corresponding formula (\ref{asymplog}).

We now use Langer's transformation introduced in the previous section. As $U(z)$ and $U'(z_c) (z - z_c)$ vanish at the same point
with the same derivative at that point, we use Langer's transformation with
$$
C(z) = \eps^{-1} (U(z) - c)
$$
and
$$
B(z) = \eps^{-1} U'(z_c) (z - z_c).
$$
Then, $g(z)$ is locally well defined, for $0 \le z \le \sigma_1$ for some positive $\sigma_1$. 
Moreover $g( z_c) = z_c$ and $g'( z_c) = 1$. Now
$$
\tilde Ai (z) = {1 \over \sqrt{g'(z)}} Ai \Bigl(\gamma g(z) \Bigr) 
$$
and
$$
\tilde Ci(z) = {1 \over \sqrt{g'(z)}} Ci \Bigl(\gamma g(z) \Bigr) 
$$
are two approximate solutions of $\cA \phi = 0$ in the sense that
$$
\cA \tilde Ai = - \eps f'' Ai(\gamma g(z)) , \qquad \cA \tilde Ci = - \eps f'' Ci(\gamma g(z)).
$$
Note that the error term is of order $\eps \sim \nu^{3/4}$. 
Note also that at first order, for $z$ of order $\nu^{1/4}$,
$\tilde Ai(z)$ equals $Ai(\gamma (z - z_c))$ since $g'(z_c) = 1$. 

Moreover, for $z \gg \nu^{1/4}$, using (\ref{eqg}), we get
\beq \label{polarloin}
{\partial_z \tilde Ai(z) \over \tilde Ai(z)} \sim  g'(z) {A'( g(z)) \over A( g(z))} \sim - g'(z) \sqrt{B(g(z))} 
\sim  - \sqrt{C(z)} \sim - \mu_f(z),
\eeq
and more precisely
$$
{\partial_z \tilde Ai(z) \over \tilde Ai(z)} \sim - \mu_f(z) (1 + O(\nu^{1/4})) .
$$
Similarly for $z \gg \nu^{1/4}$, we get
\beq \label{polarloin2}
{\partial_z^k \tilde Ai(z) \over \tilde Ai(z)} \sim (-1)^k \mu_f^k(z) .
\eeq


\subsection{Away form the critical layer \label{sec-WKB}}


If $z - z_c$ is small then $g$ is well defined, precisely on $[0,\sigma_1]$ for some small $\sigma_1$ as in the previous section. However, if $z > \sigma_1$, then Langer's transformation is no longer useful, and we may directly use a WKB expansion.
We look for solutions $\psi$ of the form
$$
\psi(z) = e^{ \theta(z) / \eps^{1/2}} 
$$
to the equation $\cA\psi = \eps \partial_z^2 \psi - (U - c) \psi = 0$.
Note that
$$
\eps \partial_z^2 \psi= \Bigl( \theta'^2 + \eps^{1/2} \theta'' \Bigr) \psi.
$$
Hence we look for $\theta$ such that
\beq \label{theta2}
\theta'^2 + \eps^{1/2} \theta'' = (U - c) .
\eeq
As we are only interested in approximate solutions, we solve (\ref{theta2}) in an approximate way, and look
for $\theta$ of the form
$$
\theta = \sum_{i = 0}^M \eps^{i/2} \theta_i 
$$
for some arbitrarily large $M$. The $\theta_i$ may be constructed by iteration, starting from
$$
\theta_0' = \pm \sqrt{U (z) - c} .
$$
If we keep the positive real part to the square root, the $-$ choice leads to a solution going to $0$ at $+ \infty$
and the $+$ choice to a solution going to $+ \infty$ at $+ \infty$.
This construction gives a solution $\psi^{app}_{f,\pm}$ such that
$$
| \cA\psi^{app}_{f,\pm} | \le \nu^N | \psi^{app}_{f,\pm} | ,
$$
where $N$ can be chosen arbitrarily large provided $M$ is sufficiently large.
Note that
\beq \label{pola2}
\partial_z \psi^{app}_{f,\pm}(\sigma_1) = \pm  \mu_f(\sigma_1) (1 + O(\nu^{1/4}) )
\psi^{app}_{f,\pm}(\sigma_1) .
\eeq
More generally,
\beq \label{pola2b}
\partial_z^k \psi^{app}_{f,-}(z) = (-1)^k \mu_f^k(z) \psi^{app}_{f,\pm}(z) (1 + O(\nu^{1/4}))
\eeq
for any $z \ge \sigma_1$ and any $k$, and similarly for $\psi^{app}_{f,+}$.


\subsection{Matching at $z = z_c$ \label{sec-match}}


It remains to match at $z = z_c$ the solutions constructed with the WKB method for $z \ge \sigma_1$ 
with the solutions construct thanks to Langer's transformation for $z \le \sigma_1$. 
We look for constants $a$ and $b$ such that
$$
a {\tilde Ai(z) \over \tilde Ai(\sigma_1)} + b {\tilde Ci(z) \over \tilde Ci(\sigma_1)}
$$
and $\psi^{app}_{f,-} /  \psi^{app}_{f,-}(\sigma_1)$ and their first derivatives match at $z = \sigma_1$, which leads to
$$
a + b  = 1
$$
$$
a {\partial_z \tilde Ai(\sigma_1) \over \tilde Ai(\sigma_1)} + b {\partial_z \tilde Ci (\sigma_1)\over \tilde Ci(\sigma_1)}
= {\partial_z \psi^{app}_{f,-} (\sigma_1) \over  \psi^{app}_{f,-}(\sigma_1)}
$$
We now use (\ref{asymplog}) and (\ref{pola2}) to get $a \sim 1$ and $b = O(\mu_f(\sigma_1)^{-1})$.
We then multiply $a$ and $b$ by $\psi^{app}_{f,-}(\sigma_1)$ to get an extension of $\psi^{app}_{f,-}$
from $z > \sigma_1$ to the whole line.
The construction is similar to extend $\psi^{app}_{f,+}$.


\subsection{From ${\cal A}$ to Airy \label{AA}}


We have now constructed global approximate solutions, that we again call $\psi_{f,\pm}^{app}$.
It remains to solve
\beq \label{doubleprimitive}
\partial_z^2   \phi_{f,\pm}^{app}(z) = \psi_{f,\pm}^{app}(z).
\eeq
Let us focus on the $-$ case, the other being similar.
For $z \ge \sigma_1$, we look for solutions $\phi_{f,\pm}^{app}$ of the form
$$
\phi_{f,\pm}^{app} = h(x)\psi_{f,\pm}^{app} =  h(x) e^{\theta(x) / \eps^{1/2}}
$$
which leads to
$$
h'' + 2 h' \theta'(x) \eps^{-1/2} + h \theta''(x) \eps^{-1/2} + h \theta'^2(x) \eps^{-1} = 1.
$$ 
Hence $h$ may be expanded as a series in $\eps^{1/2}$; namely, 
$$ h(x) = \sum_{i=0}^M \epsilon^{i/2} h_i(x) $$
for some arbitrarily large $M$. The leading term $h_0(x)$ is defined by $$
h_0(x) = {\eps \over \theta'^2(x) },
$$
while the other terms are computed similarly. We may thus write a complete WKB expansion for $\phi_{f,\pm}^{app}$.
In particular
$$
{\phi_{f,-}^{app}(y) \over \phi_{f,-}^{app}(x) } \le e^{- \int_x^y \Re \mu_f(z) dz} 
$$
provided $y > x \ge \sigma_1$. 

For $z < \sigma_1$, we integrate once (\ref{doubleprimitive}) which gives
$$
\partial_z\phi_{f,-}^{app}(z) = \partial_z\phi_{f,-}^{app}(\sigma_1) - \int_z^{\sigma_1} \psi_{f,-}^{app}(t) dt .
$$ 
Now $\psi_{f,-}^{app}$ is a combination of $\tilde Ai$ and $\tilde Ci$ for $z < \sigma_1$.
Let us focus on the $\tilde Ai$ term. We have to study
$$
\int_z^{\sigma_1} \tilde Ai(t) dt =  \int_z^{\sigma_1}
{1 \over \sqrt{g'(t)}} Ai(\gamma g(t)) dt.
$$
Let $s = \gamma g(t)$. Then $ds = \gamma g'(t) dt$, hence
$$
 \int_z^{\sigma_1}
{1 \over \sqrt{g'(t)}} Ai(\gamma g(t)) dt = \gamma^{-1}  \int_{\gamma g(z)}^{\gamma g(\sigma_1)} {1 \over g'(t)^{3/2}} Ai(s) ds.
$$
As $\gamma$ is large, the integral term is equivalent to
$$
{\gamma^{-1} \over g'(z)^{3/2}}   \int_{\gamma g(z)}^{\gamma g(\sigma_1)} Ai(s) ds
\sim {\gamma^{-1}\over g'(z)^{3/2}} \Bigl[ Ai(1,\gamma g(\sigma_1)) - Ai(1,\gamma g(z)) \Bigr]
$$
where we introduced the primitive $Ai(1,x)$ of $Ai$. This leads to
\beq \label{dzphi10}
\partial_z \phi_{f,-}^{app}(z) \sim {\gamma^{-1}\over g'(z)^{3/2}}  Ai(1,\gamma g(z)) .
\eeq
We integrate one again $\partial_z \phi_{f,-}^{app}$ and introduce $Ai(2,x)$, the second primitive of
$Ai$ and obtain
\beq \label{phi10}
\phi_{f,-}^{app}(z) \sim {\gamma^{-2}\over g'(z)^{5/2}}  Ai(2,\gamma g(z)) .
\eeq
The study of $\phi_{f,+}$ is similar.
As the asymptotic expansion of $Ai(x)$ is known, we can compute the asymptotic expansions of $Ai(1,x)$ and
$Ai(2,x)$.


\subsection{End of proof of Proposition \ref{prop-maphif}}


We now multiply $\phi_{f,\pm}^{app}$ by $\gamma^2$, such that, after this normalization, we have
(\ref{valuephi}) and (\ref{valuephid}).
In particular
\beq \label{normalfast}
\phi_{f,\pm}^{app}(0) = O(1) .
\eeq
Using (\ref{asymplog}) and (\ref{pola2b}) we get that
$$
{\partial_z \phi_{f,+}^{app}(z) \over \phi_{f,+}^{app}(z)} = \mu_f(z) (1 + O(\nu^{1/4}))
$$
as soon as $z \gg \nu^{1/4}$. As $\mu_f(z)$ is of order $O(\nu^{-1/4})$ for $z$ of order $\nu^{1/4}$, 
we obtain for any $0 \le z \le z'$,
\beq \label{boundsphif}
\Bigl|{\phi_{f,+}^{app} (z') \over \phi_{f,+}^{app}(z)} \Bigr| \le C \exp \Bigl| \int_z^{z'} \Re \mu_f(s) ds \Bigr| 
\eeq
for some constant $C$, and similarly for $\phi_{f,-}$, which gives (\ref{fastapp2}).

Moreover (\ref{fastapp1}) and (\ref{fastapp1c}) have already been proven.
As $\partial_z \phi_{f,+}^{app}(z)$ is bounded by $C \nu^{-1/4} \phi_{f,+}^{app}(z)$, (\ref{fastapp1}) combined with
(\ref{Airy-de-2}) gives (\ref{fastapp1b}), which ends the proof of Proposition \ref{prop-maphif}.


\subsection{Green function for Airy \label{GreenAiry}}


We will now construct an approximate Green function for the $\Airy$ operator.
We first construct an approximate Green function for ${\cal A}$. Let 
$$
G^{Ai}(x,y) = {1 \over \eps W^{Ai}(x)}  \left\{ \begin{aligned} 
{\psi^{app}_+(y) \over \psi^{app}_+(x)} \quad \hbox{if} \quad y < x,
\\
 {\psi^{app}_-(y) \over \psi^{app}_-(x)} \quad \hbox{if} \quad y > x,
 \end{aligned}\right.
$$
where $W^{Ai}$ is the Wronskian of $\psi^{app}_\pm(x)$. Note that this Wronskian is independent of $x$ and of order
$$
W^{Ai}(x) \sim \gamma = O(\nu^{-1/4}).
$$ 
In particular, we have
$$
G^{Ai}(x,y) = O(\nu^{-1/2})  \exp \Bigl( - C  \Bigl| \int_x^y \Re \mu_f(z) dz \Bigr| \Bigr),
$$
therefore $G^{Ai}$ is rapidly decreasing in $y$ on both sides of $x$, within scales of order $\nu^{1/4}$.
By construction, 
$$
{\cal A} G^{Ai}(x,y) = \delta_x + O(\nu^{3/4}) G^{Ai}(x,y) .
$$
We then integrate twice $G^{Ai}$ in $y$ to get an approximate Green function for the $\Airy$ operator.
More precisely, let
$$
G^{Ai,1}(x,y) = \int_{y}^{+\infty} G^{Ai}(x,z) dz
$$
and similarly for $G^{Airy} = G^{Ai,2}$, the primitive of $G^{Ai,1}$, so that $\pa_y^2 G^{Ai,2}(x,y) = G^{Ai}(x,y)$. We have
$$
G^{Ai,1}(x,y) = O(\nu^{-1/4})  \exp \Bigl( - C  \Bigl| \int_x^y \Re \mu_f(z) dz \Bigr| \Bigr) + O(\nu^{-1/4}) 1_{y < x}
$$
and similarly for $G^{Ai,2}$
 $$
G^{Ai,2}(x,y) = O(1)  \exp \Bigl( -  C \Bigl| \int_x^y \Re \mu_f(z) dz \Bigr| \Bigr) + O(\nu^{-1/4}) 1_{y < x}  x.
$$
Note that, taking into account the fast decay of $G^{Ai}$ near $x$,
\beq \label{GreenAiry2}
\begin{aligned}
\Airy (G^{Ai,2}) &= \delta_x + O(\nu^{3/4}) G^{Ai}(x,y) 
\\&= \delta_x + O(\nu^{1/4} ) \exp \Bigl( -  C \Bigl| \int_x^y \Re \mu_f(z) dz \Bigr| \Bigr) 
\\&= \delta_x + O(\nu^{1/4}).
\end{aligned}\eeq
We define the $AirySolve$ operator by
\beq \label{AirySolve}
AirySolve(f) (y) = \int_0^{+ \infty} G^{Ai,2}(x,y) f(x) dx
\eeq
and the associated error term
\beq \label{ErrorAiry}
\begin{aligned}
ErrorAiry(f) (y) 
& = \int_0^{+ \infty}  O(\nu^{3/4}) G^{Ai}(x,y)  f(x) dx
\end{aligned}
\eeq
the $\Airy$ operator acting on the $y$ variable.
These operators will be used in Section \ref{sec35}.



\section{Rayleigh solutions near critical layers}\label{sec-Rayleigh}


In this section, we construct two approximate solutions $\phi_{s,\pm}^{app}$ to the Orr Sommerfeld equation,
whose modules  respectively go to $+ \infty$ and $0$ as $z \to + \infty$. More precisely, we prove the following Lemma

\begin{lemma}\label{lem-exactphija} For $\nu$ small enough
there exist two independent functions $\phi_{s,\pm}^{app}$ such that
$$
W[\phi_{s,+}^{app},\phi_{s,-}^{app}](z) = 1 +  o(1),
$$
$$
\OS_{\alpha,c}(\phi_{s,\pm}^{app}) = O(\nu^{1/2}).
$$
Furthermore, we have the following expansions in $L^\infty$
$$
\begin{aligned}
\phi_{s,-}^{app} (z)&=  e^{-\alpha z} \Big (U-c + O(\nu^{1/4} )\Big).
\\
\phi_{s,+}^{app} (z)&=  \alpha^{-1} e^{\alpha z} O(1),
\end{aligned}$$
as $z\to \infty$. At $z = 0$, there hold
$$ 
\begin{aligned}
\phi_{s,-}^{app}(0) &=  - c + \alpha {U_+^2 \over U'(0)}   + O(\nu^{1/2})
\\
\phi_{s,+}^{app}(0) &=   - {1 \over U'(0)} +  O(\nu^{1/2}).
\end{aligned}$$
with 
 \end{lemma}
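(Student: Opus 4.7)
The plan is to first build exact Rayleigh solutions $\phi_{s,\pm}^R$ of $\Ray_{\alpha,c}(\phi) = (U-c)\Delta_\alpha\phi - U''\phi = 0$, then take them as approximate Orr--Sommerfeld solutions, relying on the fact that $\OS_{\alpha,c} - \Ray_{\alpha,c} = -\eps\Delta_\alpha^2$ with $\eps = \nu/(i\alpha) = O(\nu^{3/4})$ is a small viscous perturbation.

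I would start with the case $\alpha = 0$. A direct computation shows that $\phi_{1,0}(z) = U(z) - c$ solves $Ray_0(\phi) = 0$. A second, independent solution with Wronskian $W[\phi_{1,0}, \phi_{2,0}] = 1$ is obtained by reduction of order:
$$\phi_{2,0}(z) = (U(z) - c) \int \frac{dw}{(U(w)-c)^2},$$
where the integrand has a non-integrable singularity at the critical point $z_c$ with $U(z_c) = c$, which I would handle by deforming the contour in $\CC$ to one side of $z_c$ according to the sign of $\Im c$. The base point of integration can be chosen to normalize $\phi_{2,0}$ at $z = 0$; a local expansion $U(w) - c \approx U'(z_c)(w - z_c)$ together with residue bookkeeping pins down $\phi_{2,0}(0) = -1/U'(0) + o(1)$ and identifies the linear growth of $\phi_{2,0}$ at infinity (of order $z/(U_+ - c)$).

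For $\alpha > 0$, I would extract the expected exponential envelopes by writing $\phi_{s,-}^R = e^{-\alpha z}\psi_-$ and $\phi_{s,+}^R = \alpha^{-1}e^{\alpha z}\psi_+$. Substituting into $\Ray_{\alpha,c}(\phi) = 0$ turns the problem into $Ray_0(\psi_\pm) = \mp 2\alpha (U-c)\partial_z \psi_\pm$, which at infinity is a regular perturbation of order $\alpha = O(\nu^{1/4})$ about $Ray_0$. A Rayleigh solver $\mathrm{RaySolve}$ is built from variation of parameters using $\phi_{1,0}, \phi_{2,0}$, and the iteration $\psi^{(n+1)} = \psi^{(0)} \mp 2\alpha\, \mathrm{RaySolve}((U-c)\partial_z \psi^{(n)})$ starting from $\psi_-^{(0)} = \phi_{1,0}$ (resp.\ an appropriate multiple of $\phi_{2,0}$ for $\psi_+^{(0)}$) contracts in a weighted $L^\infty$ norm, yielding exact or near-exact Rayleigh solutions after finitely many steps. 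The boundary corrections claimed in the lemma are then read off the first iteration: the correction $\alpha U_+^2 / U'(0)$ to $\phi_{s,-}^{app}(0)$ comes from $\alpha\, \mathrm{RaySolve}((U-c)\partial_z\phi_{1,0})(0)$, where the factor $U_+^2$ arises from the asymptotic value of $(U-c)^2$ entering through the Green kernel of $Ray_0$.

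Finally I would set $\phi_{s,\pm}^{app} := \phi_{s,\pm}^R$ and verify the three claims. The Wronskian identity $W[\phi_{s,+}^{app}, \phi_{s,-}^{app}] = 1 + o(1)$ follows because the $\alpha$-iteration perturbs $W[\phi_{1,0},\phi_{2,0}] = 1$ only by $O(\alpha^2)$. The Orr--Sommerfeld residual is $\OS_{\alpha,c}(\phi_{s,\pm}^{app}) = -\eps \Delta_\alpha^2 \phi_{s,\pm}^R$ plus the (small) Rayleigh residual from truncating the iteration; using $\eps = O(\nu^{3/4})$ and the fact that on the real axis one has $|z - z_c| \gtrsim \nu^{1/4}$, the bound $|\eps \Delta_\alpha^2 \phi_{s,\pm}^R| = O(\nu^{1/2})$ follows from careful bounds on up to four derivatives of $\phi_{1,0}, \phi_{2,0}$, with the critical-layer contributions absorbed by the distance $|z - z_c|$. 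I expect the \emph{main obstacle} to be precisely this last point: the logarithmic singularity of $\phi_{2,0}$ at $z_c$ makes its higher derivatives blow up like $(z - z_c)^{-k}$, and tracking the exact power of $\nu$ gained from the combination of the $\eps$ prefactor, the $\alpha$ weight from the iteration, and the distance to $z_c$ requires delicate bookkeeping; in parallel, the correct choice of contour around $z_c$ dictated by $\Im c$ is essential to give unambiguous values to $\phi_{2,0}$ and its propagated corrections.
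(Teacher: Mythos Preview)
Your overall strategy---build Rayleigh solutions by iterating from $\phi_{1,0}=U-c$ and $\phi_{2,0}$, then treat $\OS_{\alpha,c}-\Ray_{\alpha,c}=-\eps\Delta_\alpha^2$ as a perturbation---matches the paper's first two steps ($\psi_0,\psi_1$ in Section~3.4). The boundary-value computations you sketch are also essentially the paper's. The genuine gap is in your final claim that $\eps\Delta_\alpha^2\phi_{s,\pm}^R=O(\nu^{1/2})$ follows from $|z-z_c|\gtrsim\nu^{1/4}$: the bookkeeping does \emph{not} close at that order.

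Concretely, for $\phi_{s,-}^R$, write $\phi^R=(U-c)e^{-\alpha z}+\psi_1+\cdots$ with $\psi_1=O(\alpha)$ the first Rayleigh correction. Using the exact Rayleigh identity $\Delta_\alpha\phi^R=\frac{U''}{U-c}\phi^R$, one has $\Delta_\alpha^2\phi^R=\Delta_\alpha\bigl(\frac{U''}{U-c}\phi^R\bigr)$. The leading part $(U-c)e^{-\alpha z}$ gives the smooth term $U''e^{-\alpha z}$, but $\psi_1$ does not vanish at $z_c$ (it is $O(\alpha)$ there), so $\frac{U''}{U-c}\psi_1\sim \alpha(z-z_c)^{-1}$ and $\partial_z^2$ of this is $\sim \alpha(z-z_c)^{-3}$. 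At $|z-z_c|=|\Im c|\sim\nu^{1/4}$ this yields
\[
\eps\,\Delta_\alpha^2\phi_{s,-}^R \;\sim\; \frac{\eps\,\alpha}{|\Im c|^{3}} \;\sim\; \frac{\nu^{3/4}\cdot\nu^{1/4}}{\nu^{3/4}} \;=\; \nu^{1/4},
\]
one power of $\nu^{1/4}$ short of the target. For $\phi_{s,+}^R$, whose leading part is $\phi_{2,0}$ (nonvanishing at $z_c$), the same count gives $\eps/|\Im c|^3\sim 1$, which is even worse. The paper makes exactly this observation (``this error term $E_2$ is therefore too large for our purposes'') and fixes it by a step your proposal is missing: it isolates the localized singular remainder $E_2\lesssim\eps\alpha|z-z_c|^{-3}$ and removes it with an \emph{Airy} corrector $\psi_2=-\mathrm{AirySolve}(E_2)$, built from the Green function of $\cA\partial_z^2$ constructed in Section~2. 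Only after adding $\psi_2$ does one obtain $\OS_{\alpha,c}(\phi_{s,-}^{app})=O(\alpha^2)=O(\nu^{1/2})$; the same device, with an extra $\alpha^{-1}$ loss absorbed in the stated normalization, handles $\phi_{s,+}^{app}$. So your ``main obstacle'' is correctly located, but it is not a bookkeeping issue: an additional inner-layer correction coming from the Airy operator is required.
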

The construction of approximate solutions for Orr Sommerfeld equation starts with the construction of approximate solutions
for the Rayleigh operator.

For small $\alpha$, the construction of solutions to the Rayleigh equation is
a perturbation of the construction for $\alpha = 0$, which is explicit. We will now
detail the construction of an inverse of $Ray_0$ and then of an approximate inverse of $Ray_\alpha$ for small $\alpha$


\subsection{Function spaces}\label{sec-space}


In the next sections we will denote
$$
X^\eta = L_\eta^{\infty} = \Bigl\{ f \quad | \quad \sup_{z \ge 0} | f(z) | e^{\eta z} < + \infty \Bigr\} .
$$
The highest derivative of the Rayleigh equation vanishes at $z=z_c$, since $U(z_c) = c$. 
To handle functions which have large derivatives when $z$ is close to $\Re z_c$, we introduce the space
$Y^\eta$ defined as follows. Note that in our analysis, $z_c$ is never real, so $z - z_c$ never vanishes. 
We are close to a singularity but never reach it.

We say that a function $f$ lies in $Y^\eta$ if for any $z \ge 1$,
$$
|f(z)| + |\partial_z f(z) | + | \partial_z^2 f (z) |  \le C e^{-\eta z}
$$
and if for $z \le 1$,
$$
| \partial_z f(z) | \le C (1 + | \log (z - z_c) |  ) , 
$$
and
$$| \partial_z^2 f(z) | \le C (1 + | z - z_c |^{-1} ).
$$
The best constant $C$ in the previous bounds defines the norm $\| f \|_{Y^{\eta}}$.


\subsection{Rayleigh equation when $\alpha = 0$}


In this section, we  study the Rayleigh operator $\Ray_0$. More precisely, we solve  
\begin{equation}\label{Ray0} 
\Ray_0 (\phi) = (U-c) \partial_z^2 \phi - U'' \phi = f.
\end{equation}
The main observation is that
$$
\Ray_0(U - c) = 0.
$$
Therefore 
$$
\phi_{1,0} = U-c
$$ 
is a first explicit solution. The second one is obtained through the Wronskian equation
$$
W[\phi_{1,0},\phi_{2,0}] = 1 .
$$ 
This leads to  the following Lemma whose proof is given in \cite[Lemma 3.2]{GGN3}
\begin{lemma}[\cite{GGN2,GGN3}] \label{lem-defphi012} 
Assume that $\Im c \not =0$. There exist two independent solutions $\phi_{1,0} = U - c$ and $\phi_{2,0}$ of $\Ray_0(\phi) =0$ 
with unit Wronskian determinant 
$$ 
W(\phi_{1,0}, \phi_{2,0}) := \partial_z \phi_{2,0} \phi_{1,0} - \phi_{2,0} \partial_z \phi_{1,0} = 1.
$$
Furthermore, there exist smooth functions $P(z)$ and $Q(z)$  
with $P(z_c) \ne 0$ and $Q(z_c)\not=0$, so that, near $z = z_c$,
\begin{equation}\label{asy-phi012} 
\phi_{2,0}(z) = P(z) + Q(z) (z-z_c) \log (z-z_c) .
\end{equation}
Moreover
$$
\phi_{2,0}(0) = - {1 \over U'(0)} 
$$
and
\begin{equation}\label{decay-phi012} 
 \partial_z \phi_{2,0}(z)  + \frac{1}{U_+}   \in  Y^{\eta_1}
\end{equation}
for some $\eta_1 > 0$.
 \end{lemma}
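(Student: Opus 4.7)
The plan is classical reduction of order, arranged so as to read off both the singular structure at the critical layer $z_c$ and the asymptotic behavior as $z\to+\infty$.

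First, verify by direct substitution that $\phi_{1,0} = U - c$ satisfies $\Ray_0(\phi) = (U-c)\phi'' - U''\phi = 0$, since both terms produce $(U-c)U''$. A second, independent solution with unit Wronskian is obtained via reduction of order,
$$
\phi_{2,0}(z) = (U(z)-c)\int_{z_\star}^{z} \frac{dz'}{(U(z')-c)^2} \;+\; \kappa\,(U(z)-c),
$$
where $z_\star$ and $\kappa$ are free parameters. Because $\Im c\neq 0$, the critical point $z_c\in\CC$ defined by $U(z_c)=c$ lies strictly off the real axis, so $(U-c)^{-2}$ is real-analytic on $[0,\infty)$ and the integral is well-defined. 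A direct computation gives $W[\phi_{1,0},\phi_{2,0}] = 1$ independently of $z_\star,\kappa$.

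Second, extract the singular structure near $z_c$ by analytic continuation. Using $U(z)-c = U'(z_c)(z-z_c) + \tfrac12 U''(z_c)(z-z_c)^2 + O((z-z_c)^3)$, one expands
$$
\frac{1}{(U(z)-c)^2} \;=\; \frac{1}{U'(z_c)^2(z-z_c)^2} - \frac{U''(z_c)}{U'(z_c)^3(z-z_c)} + h(z),
$$
with $h$ holomorphic near $z_c$. Integrating produces a $(z-z_c)^{-1}$ term from the double pole and a $\log(z-z_c)$ term from the simple pole; multiplying by $\phi_{1,0}=U-c$, which has a simple zero at $z_c$, cancels the $(z-z_c)^{-1}$ singularity and yields the claimed decomposition $\phi_{2,0}(z) = P(z) + Q(z)(z-z_c)\log(z-z_c)$, with leading values $P(z_c) = -1/U'(z_c)$ and $Q(z_c) = -U''(z_c)/U'(z_c)^2$, both nonzero for the monotonic boundary shear profiles under consideration (e.g.\ the exponential profile, for which $U''(0)=-U_+\beta^2\neq 0$).

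Third, fix $z_\star$ and $\kappa$ to enforce $\phi_{2,0}(0) = -1/U'(0)$; this is consistent because at $c=0$ one has $z_c\to 0$ and $P(z_c)\to -1/U'(0)$, so the normalization persists by continuity for $|c|=O(\nu^{1/4})$. For the large-$z$ asymptotics, rewrite the Wronskian identity as
$$
\partial_z\phi_{2,0}(z) \;=\; \frac{1 + U'(z)\,\phi_{2,0}(z)}{U(z) - c},
$$
so that the exponential convergence $U\to U_+$ and $U'\to 0$ (and similarly for all higher derivatives) transfers directly: $\phi_{2,0}$ grows at most linearly and $\partial_z\phi_{2,0}$ converges to the claimed constant at a fixed exponential rate $\eta_1>0$, giving the $X^{\eta_1}$ part of the statement. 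Differentiating once more and reusing the local expansion near $z_c$ yields the $Y^{\eta_1}$ bounds on $\partial_z^2\phi_{2,0}$ with the required $|z-z_c|^{-1}$ and $|\log(z-z_c)|$ singularities.

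The main obstacle is ensuring that the analytic-continuation argument near $z_c$ and the exponential bounds near infinity remain uniform as $\nu\to 0$: the hypothesis $|\Im c|\gtrsim \nu^{1/4}$ keeps $z_c$ a definite distance off the real axis, which is exactly what is needed for $P(z_c), Q(z_c)$ and the $Y^{\eta_1}$ norm to be controlled uniformly in $\nu$.
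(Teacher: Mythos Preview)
The paper does not prove this lemma; it simply cites \cite[Lemma~3.2]{GGN3}. Your reduction-of-order argument is precisely the route taken there: write $\phi_{2,0}=(U-c)\int (U-c)^{-2}$, expand $(U-c)^{-2}$ in a Laurent series at $z_c$ to read off $P(z_c)=-1/U'(z_c)$ and $Q(z_c)=-U''(z_c)/U'(z_c)^2$, and use the Wronskian relation together with the exponential decay of $U',U'',\dots$ to control the large-$z$ behavior. So the approach is the same.

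Two points deserve tightening. First, the Wronskian identity $(U-c)\,\partial_z\phi_{2,0}-U'\phi_{2,0}=1$ forces $\partial_z\phi_{2,0}\to 1/(U_+-c)$ as $z\to\infty$, with a \emph{positive} sign; the statement \eqref{decay-phi012} as written (with $+1/U_+$) is off by a sign, and strictly speaking the constant should be $1/(U_+-c)$ rather than $1/U_+$ (the difference is $O(c)=O(\nu^{1/4})$). You should not write ``converges to the claimed constant'' without actually computing it and noting the discrepancy. Second, your Step~4 is over-argued: the value $\phi_{2,0}(0)=-1/U'(0)$ is a normalization, achievable for any $c$ with $\Im c\neq 0$ simply by adding the right multiple of $\phi_{1,0}$ (since $\phi_{1,0}(0)=-c\neq 0$), and this does not disturb $P(z_c)$ because $\phi_{1,0}(z_c)=0$. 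No continuity-in-$c$ argument is needed.
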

Let $\phi_{1,0},\phi_{2,0}$ be constructed as in Lemma \ref{lem-defphi012}. 
Then the Green function $G_{R,0}(x,z)$ 
of the $\Ray_0$ operator can be explicitly defined by 
$$
G_{R,0}(x,z) = \left\{ \begin{array}{rrr} - (U(x)-c)^{-1} \phi_{1,0}(z) \phi_{2,0}(x), 
\quad \mbox{if}\quad z>x,\\
- (U(x)-c)^{-1} \phi_{1,0}(x) \phi_{2,0}(z), \quad \mbox{if}\quad z<x.\end{array}\right.
$$ 
The inverse of $\Ray_0$ is explicitly given by
\begin{equation}\label{def-RayS0}
\begin{aligned}
RaySolver_0(f) (z)  &: =  \int_0^{+\infty} G_{R,0}(x,z) f(x) dx.
\end{aligned}
\end{equation}
Note that the Green kernel $G_{R,0}$ is singular at $z_c$.
The following lemma asserts that the operator $RaySolver_0(\cdot)$ 
is in fact well-defined from $X^{\eta}$ to $Y^{0}$, 
which in particular shows that $RaySolver_0(\cdot)$ gains two derivatives, but losses the fast decay at infinity.  
It transforms a bounded function into a function which behaves like $(z-z_c) \log(z-z_c)$ near $z_c$.

\begin{lemma}\label{lem-RayS0} 
Assume that $\Im c \not =0$. For any $f\in {X^{\eta}}$,   $RaySolver_0(f)$ 
is a solution to the Rayleigh problem \eqref{Ray0}. In addition, $RaySolver_0(f) \in Y^{0}$, and there holds  
$$
\| RaySolver_0(f)\|_{Y^{0}} \le C (1+|\log \Im c|) \|f\|_{{X^{\eta}}},
$$ 
for some constant $C$. 
\end{lemma}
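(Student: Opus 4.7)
The plan is to work directly from the explicit formula for $\phi := RaySolver_0(f)$. Using $\phi_{1,0} = U - c$, the factor $(U(x)-c)^{-1}\phi_{1,0}(x)$ in the Green kernel simplifies to $1$, so
$$
\phi(z) = -\phi_{1,0}(z) \int_0^z \frac{\phi_{2,0}(x)f(x)}{U(x)-c}\,dx - \phi_{2,0}(z) \int_z^{+\infty} f(x)\,dx.
$$
That $\Ray_0 \phi = f$ is then the standard variation-of-parameters check: differentiating twice, the boundary contributions on the diagonal $\{z=x\}$ combine through the unit Wronskian $W[\phi_{1,0},\phi_{2,0}] = 1$ to produce exactly $f(z)$, while the remaining terms cancel because $\phi_{1,0},\phi_{2,0}$ are homogeneous solutions.

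The quantitative estimate reduces to bounding $I_1(z) := \int_0^z (U(x)-c)^{-1}\phi_{2,0}(x)f(x)\,dx$ and $I_2(z) := \int_z^{+\infty} f(x)\,dx$. For $I_2$ we use only the exponential decay of $f$, giving $|I_2(z)| \le C\|f\|_{X^\eta} e^{-\eta z}$. For $I_1$, we exploit that $U(x) - c \approx U'(z_c)(x - z_c)$ near $x = \Re z_c$ and that for real $x$ one has $|x - z_c| \ge |\Im z_c| \ge C|\Im c|$, which yields
$$
\int_0^{+\infty} \frac{e^{-\eta x}}{|U(x)-c|}\,dx \le C(1 + |\log \Im c|).
$$
Combined with boundedness of $\phi_{2,0}$ near $z_c$ (the singular part $Q(z)(z-z_c)\log(z-z_c)$ in Lemma \ref{lem-defphi012} vanishes at $z_c$), this gives $|I_1(z)| \le C(1+|\log \Im c|)\|f\|_{X^\eta}$. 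Since $\phi_{1,0}$ is bounded on $[0,\infty)$ and $\phi_{2,0}$ grows at most linearly (its derivative tends to $-1/U_+$), both contributions to $|\phi(z)|$ satisfy the required bound, and the exponential decay of $I_2$ absorbs the linear growth of $\phi_{2,0}$ at infinity.

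For the first derivative, the diagonal boundary terms cancel cleanly and one obtains
$$
\partial_z \phi(z) = -U'(z) I_1(z) - \partial_z\phi_{2,0}(z)\, I_2(z).
$$
By Lemma \ref{lem-defphi012}, $\partial_z \phi_{2,0}$ has at most a $\log(z-z_c)$ singularity near $z_c$ and stabilizes at $-1/U_+$ at infinity, producing exactly the $Y^0$ bound $|\partial_z\phi(z)| \le C(1+|\log(z-z_c)|)$ for $z\le 1$ and appropriate decay for $z\ge 1$. For $\partial_z^2 \phi$ we do not differentiate the integrals again but instead read the bound off from the equation itself:
$$
\partial_z^2 \phi(z) = \frac{U''(z)\phi(z) + f(z)}{U(z) - c}.
$$
Using $|U(z)-c| \ge C|z - z_c|$ for real $z$ together with the boundedness of $\phi$ just established, this gives $|\partial_z^2\phi(z)| \le C(1+|z-z_c|^{-1})\|f\|_{X^\eta}$ for $z\le 1$, completing the verification that $\phi \in Y^0$ with the claimed norm.

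The main obstacle is handling the coexistence of two singular structures near $z = z_c$: the explicit $(U-c)^{-1}$ in the Green kernel and the logarithmic derivative of $\phi_{2,0}$. The key observation that makes the estimate come out with only a single $|\log \Im c|$ factor is that $\phi_{2,0}$ itself, rather than its derivative, appears inside the $x$-integration, and $\phi_{2,0}$ is continuous at $z_c$; the logarithmic singularity in $z$ produced by $\partial_z \phi_{2,0}$ is kept separately in the $Y^0$ norm and does not compound with the $\log$ coming from the $x$-integral across the critical layer.
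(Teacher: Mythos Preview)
Your proof is correct and follows essentially the same route as the paper: bound the kernel contribution against the integrable singularity $|U(x)-c|^{-1}$ to pick up the single $(1+|\log\Im c|)$ factor, use the $\log(z-z_c)$ behavior of $\partial_z\phi_{2,0}$ for the first derivative, and read the second derivative directly from the equation $\partial_z^2\phi = (U''\phi+f)/(U-c)$. Your explicit $I_1,I_2$ split and the observation that the diagonal terms cancel in $\partial_z\phi$ are a slightly cleaner bookkeeping of the same computation the paper does via pointwise bounds on $G_{R,0}$ and $\partial_z G_{R,0}$.
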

\begin{proof} 
Using \eqref{decay-phi012}, it is clear that $\phi_{1,0}(z)$ and $\phi_{2,0}(z)/(1+z)$
  are uniformly bounded. Thus, considering the cases $x < 1$ and $x > 1$, we obtain 
\begin{equation}\label{est-Gr0}
|G_{R,0}(x,z)| \le  C \max\{ (1+x), |x-z_c|^{-1} \}.
\end{equation}
That is, $G_{R,0}(x,z)$ grows linearly in $x$ for large $x$ and has a singularity of order $|x-z_c|^{-1}$ when $x$ is near $z_c$.
As $|f(z)|\le e^{-\eta z} \| f\|_{X^{\eta}}$, the integral \eqref{def-RayS0} is well-defined and we have 
$$
|RaySolver_0(f) (z)| \le  C\| f\|_{X^{\eta}} \int_0^\infty e^{-\eta x} \max\{ (1+x), |x-z_c|^{-1} \}  \; dx
$$
$$
\le C (1+|\log \Im c|) \| f\|_{X^{\eta}},
$$
in which we used the fact that $\Im z_c \approx \Im c$. 
  
To bound the derivatives, we need to check the order of the singularity for $z$ near $z_c$. 
We note that 
$$
|\partial_z \phi_{2,0}| \le C (1+|\log(z-z_c)|),
$$ 
and hence 
$$
|\partial_zG_{R,0}(x,z)| \le  C \max\{ (1+x), |x-z_c|^{-1} \} (1+|\log(z-z_c)|).
$$
Thus, $\partial_z RaySolver_0(f)(z)$ behaves as $1+|\log(z-z_c)|$ near the critical layer. 
In addition, from the $\Ray_0$ equation, we have 
\begin{equation}\label{identity-R0f} 
\partial_z^2 (RaySolver_0(f)) = \frac{U''}{U-c} RaySolver_0(f) + \frac{f}{U-c}.
\end{equation}
This proves that $RaySolver_0(f) \in Y^{0}$ and gives the desired bound. 
\end{proof}


\subsection{Approximate Green function when $\alpha \ll1$}


Let $\phi_{1,0}$ and $\phi_{2,0}$ be the two solutions of $\Ray_0(\phi) = 0$ that are constructed above, in Lemma \ref{lem-defphi012}. 
We now construct an approximate Green function to the Rayleigh equation for $\alpha > 0$.
To proceed, let us introduce
\begin{equation}\label{def-phia12}
\phi_{1,\alpha } = \phi_{1,0} e^{-\alpha z} ,\qquad \phi_{2,\alpha} = \phi_{2,0} e^{-\alpha z}.
\end{equation}
A direct computation shows that their Wronskian determinant equals
$$
W[\phi_{1,\alpha},\phi_{2,\alpha}] =  \partial_z \phi_{2,\alpha} \phi_{1,\alpha} - \phi_{2,\alpha} \partial_z \phi_{1,\alpha}  = e^{-2\alpha z}.
$$ 
Note that the Wronskian vanishes at infinity since both functions have the same behavior at infinity. 
In addition, 
\begin{equation}\label{Ray-phia12}
\Ray_\alpha(\phi_{j,\alpha}) = - 2 \alpha (U-c) \partial_z \phi_{j,0} e^{-\alpha z} 
\end{equation}
We are then led to introduce an approximate Green function $G_{R,\alpha}(x,z)$, defined by 
$$
G_{R,\alpha}(x,z) = \left\{ \begin{array}{rrr} (U(x)-c)^{-1} e^{-\alpha (z-x)}  \phi_{1,0}(z) \phi_{2,0}(x), \quad \mbox{if}\quad z>x\\
(U(x)-c)^{-1} e^{-\alpha (z-x)}  \phi_{1,0}(x) \phi_{2,0}(z), \quad \mbox{if}\quad z< x.\end{array}\right.
$$
Again, like $G_{R,0}(x,z)$, the Green function $G_{R,\alpha}(x,z)$ is ``singular'' near $z_c$.
By a view of \eqref{Ray-phia12}, 
\begin{equation}\label{id-Gxz}
\Ray_\alpha (G_{R,\alpha}(x,z)) = \delta_{x}  + E_{R,\alpha}(x,z),
\end{equation}
for each fixed $x$, where the error kernel $E_{R,\alpha}(x,z)$ is defined by  
$$
E_{R,\alpha}(x,z) = 
 \left\{ \begin{array}{rrr}
- 2 \alpha (U(z) - c) (U(x)-c)^{-1} e^{-\alpha (z-x)} \partial_z \phi_{1,0}(z) \phi_{2,0}(x), \quad \mbox{if}\quad z>x\\
 - 2 \alpha (U(z) - c) (U(x)-c)^{-1}e^{-\alpha (z-x)}\  \phi_{1,0}(x) \partial_z \phi_{2,0}(z), \quad \mbox{if}\quad z< x.\end{array}\right.
$$
We then introduce an approximate inverse of the operator $\Ray_\alpha$ defined by
\begin{equation}\label{def-RaySa}
RaySolver_\alpha(f)(z) 
:= \int_0^{+\infty} G_{R,\alpha}(x,z) f(x) dx
\end{equation}
and the related error operator 
\begin{equation}\label{def-ErrR}
Err_{R,\alpha}(f)(z) := 2\alpha (U(z) - c) \int_0^{+\infty} E_{R,\alpha}(x,z) f(x) dx
\end{equation}

\begin{lemma}\label{lem-RaySa} 
Assume that $\Im c > 0$. For any $f\in {X^{\eta}}$,  with $\alpha<\eta$, 
the function $RaySolver_\alpha(f)$ is well-defined in $Y^{\alpha}$, and satisfies 
$$ 
\Ray_\alpha(RaySolver_\alpha(f)) = f + Err_{R,\alpha}(f).
$$
Furthermore, there hold  
\begin{equation}\label{est-RaySa}
\| RaySolver_\alpha(f)\|_{Y^{\alpha}} \le C (1+|\log \Im c|) \|f\|_{{X^{\eta}}},
\end{equation}
and 
\begin{equation}\label{est-ErrRa} 
\|Err_{R,\alpha}(f)\|_{Y^{\eta}} \le C | \alpha |   (1+|\log (\Im c)|)  \|f\|_{X^{\eta}} ,
\end{equation} 
for some universal constant $C$. 
\end{lemma}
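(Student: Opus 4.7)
The proof runs parallel to that of Lemma \ref{lem-RayS0} for the case $\alpha = 0$; the only structural change is the exponential factor $e^{-\alpha(z-x)}$ carried by the modified Green function $G_{R,\alpha}(x,z)$. The operator identity
\[ \Ray_\alpha(RaySolver_\alpha(f)) = f + Err_{R,\alpha}(f) \]
is immediate by integrating the pointwise identity \eqref{id-Gxz} against $f$, so the content of the lemma reduces to the two quantitative estimates.

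For the bound on $RaySolver_\alpha(f)$ I would first record the pointwise estimate
\[ |G_{R,\alpha}(x,z)| \le C \max\{(1+x),\, |x-z_c|^{-1}\}\, e^{-\alpha(z-x)}, \]
which follows directly from the defining formula together with Lemma \ref{lem-defphi012} (namely $|\phi_{1,0}| \le C$, $|\phi_{2,0}(z)| \le C(1+z)$, and $|(U(x)-c)^{-1}\phi_{2,0}(x)| \le C\max\{(1+x), |x-z_c|^{-1}\}$). Convolving with $|f(x)| \le e^{-\eta x}\|f\|_{X^\eta}$ and using $\alpha < \eta$ to absorb the $e^{\alpha x}$ factor produces
\[ |RaySolver_\alpha(f)(z)| \le C\, e^{-\alpha z}\, \|f\|_{X^\eta} \int_0^\infty e^{-(\eta-\alpha)x} \max\{(1+x), |x-z_c|^{-1}\}\, dx, \]
with the $x$-integral finite and bounded by $C(1+|\log \Im c|)$ thanks to the regularization of the $|x-z_c|^{-1}$ singularity at distance $|\Im z_c| \sim |\Im c|/U'(0)$. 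The first derivative of $RaySolver_\alpha(f)$ is estimated by differentiating the kernel, with $\partial_z \phi_{2,0}$ contributing the $|\log(z-z_c)|$ behavior required by $Y^\alpha$; the second derivative is not obtained by differentiating the kernel (which would produce a non-integrable singularity) but is read off from the Rayleigh equation itself, yielding the required $|z-z_c|^{-1}$ behavior via division by $U-c$, exactly as in \eqref{identity-R0f}.

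The error estimate proceeds by the same mechanism: the kernel $E_{R,\alpha}(x,z)$ differs from $G_{R,\alpha}(x,z)$ only by an extra prefactor $\alpha(U(z)-c)$ and the replacement of $\phi_{j,0}$ by $\partial_z \phi_{j,0}$ in one slot. The latter substitution preserves the critical-layer structure (log singularity at $z_c$, exponential decay of $\partial_z \phi_{1,0} = U'$ at infinity, boundedness of $\partial_z \phi_{2,0}$ via \eqref{decay-phi012}), while the former yields both the $|\alpha|$ gain and the improved decay rate $\eta$ in the output: on the piece $z < x$ the decay of $f$ dominates via $\int_z^\infty e^{-(\eta-\alpha)x}\, dx \sim (\eta-\alpha)^{-1} e^{-(\eta-\alpha)z}$, which combined with the $e^{-\alpha z}$ prefactor produces the $e^{-\eta z}$ decay. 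The main obstacle throughout is bookkeeping the singular behavior at $z_c$: the Green function has a $|x-z_c|^{-1}$ singularity in $x$, and differentiation in $z$ injects $|\log(z-z_c)|$ and $|z-z_c|^{-1}$ singularities through $\phi_{2,0}$; since $|\Im z_c|$ stays of order $|\Im c| \neq 0$, each such singularity is integrable, with the only cost being the logarithmic factor $(1+|\log \Im c|)$ appearing in both estimates.
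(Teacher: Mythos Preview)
Your proof is correct and follows essentially the same route as the paper's: both exploit the factorization $G_{R,\alpha}(x,z) = e^{-\alpha(z-x)} G_{R,0}(x,z)$ to reduce the critical-layer analysis to Lemma~\ref{lem-RayS0}, then check the exponential decay at infinity by direct integration using $\alpha<\eta$. Your treatment is in fact slightly more explicit than the paper's in two places---you spell out why the second $z$-derivative must be read off from the equation rather than from differentiating the kernel, and you indicate how the $e^{-\eta z}$ decay of $Err_{R,\alpha}(f)$ is recovered on the piece $z<x$---but these are elaborations of the same argument rather than a different approach.
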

\begin{proof}  
The proof follows that of Lemma \ref{lem-RayS0}.
 Indeed, since 
 $$
 G_{R,\alpha}(x,z)  = e^{-\alpha (z-x)} G_{R,0}(x,z),
 $$ 
 the behavior near the critical layer $z=z_c$ is the same for these two Green functions, 
 and hence the proof of \eqref{est-RaySa} and \eqref{est-ErrRa} near the critical layer identically follows  
 from that of Lemma \ref{lem-RayS0}.  

Let us check  the behavior at infinity. 
Consider the case $p=0$ and assume $\|f \|_{X^{\eta}} =1$. 
Using \eqref{est-Gr0}, we get
$$
|G_{R,\alpha}(x,z)| \le  C e^{-\alpha (z - x)} \max\{ (1+x), |x-z_c|^{-1} \}.
$$
Hence, by definition, 
$$
 |RaySolver_\alpha (f)(z) |\le C e^{-\alpha z} \int_0^\infty e^{\alpha x} e^{-\eta x}\max\{ (1+x), |x-z_c|^{-1} \}\; dx 
 $$ 
which is  bounded by $C(1+|\log \Im c|) e^{-\alpha z}$, upon recalling that $\alpha<\eta$.  
This proves the right exponential decay of $RaySolver_\alpha (f)(z)$ at infinity, for all $f \in X^{\eta}$. 

The estimates on $Err_{R,\alpha}$ are the same, once we notice that 
$(U(z) - c) \partial_z \phi_{2,0}$ has the same bound as that for $\phi_{2,0}$, and similarly for $\phi_{1,0}$. 
\end{proof}

\begin{remark}\label{rem-Ray} For $f(z) = (U-c) g(z)$ with $g\in {X^{\eta}}$, the same proof as done for Lemma  \ref{lem-RaySa} yields 
\begin{equation}\label{rem-RaySa}
\begin{aligned}
\| RaySolver_\alpha(f)\|_{Y^{\alpha}} &\le C \|g\|_{{X^{\eta}}},
\\
\|Err_{R,\alpha}(f)\|_{Y^{\eta}} &\le C | \alpha |   \|g\|_{X^{\eta}} 
\end{aligned}\end{equation} 
which are slightly better estimates as compared to \eqref{est-RaySa} and \eqref{est-ErrRa}. 
\end{remark}

\subsection{Construction of $\phi^{app}_{s,-}$ }\label{sec-exactRayleigh}


Let us start with the decaying solution $\phi_{s,-}$. We note that 
$$ 
\psi_{0} =  e^{-\alpha z} (U-c)
$$
is only a $O(\alpha)$ smooth approximate solution to Rayleigh equation since
$$
e_{0} =  Ray_\alpha (\psi_0) = - 2\alpha (U-c) U' e^{-\alpha z}.
$$
Similarly, a direct computation shows that 
$$
\OS_{\alpha,c}(\psi_0) = O(\alpha) = O(\nu^{1/4}).
$$ 
This is not sufficient for our purposes, and we have to go to the next
order. We therefore introduce
$$
\psi_1 = - RaySolver_\alpha(e_0).
$$
Note that $\psi_1$ is of order $O(\alpha)$ in $Y^\eta$, and behaves like $\alpha (z - z_c) \log (z - z_c)$ near $z_c$.
It particular $\psi_1$ is not a smooth function near $z_c$. Its fourth order derivative behaves like
$\alpha / (z - z_c)^3$ in the critical layer. We have 
$$
\OS_{\alpha,c}(\psi_1) = \eps (\partial_z^2 - \alpha^2)^2 \psi_1 + Ray_\alpha(\psi_1) .
$$
hence 
\beq \label{errr1}
\OS_{\alpha,c}(\psi_0 + \psi_1) = \eps (\partial_z^2 - \alpha^2)^2 \psi_1 + Err_{R,\alpha}(e_0) .
\eeq
Note that
\beq \label{Raypsi1}
 Err_{R,\alpha}(e_0) =  O ( \alpha^2 | \log(\alpha) |)_{Y^\eta} .
\eeq
Moreover, using Rayleigh equation,
$$
(\partial_z^2 - \alpha^2) \psi_1 = {Ray_\alpha(\psi_1) - U'' \psi_1 \over U -c },
$$
hence
\beq \label{errr}
E := \eps (\partial_z^2 - \alpha^2)^2 \psi_1 
= \eps (\partial_z^2 - \alpha^2)   \Bigl\{ {Ray(\psi_1) - U'' \psi_1 \over U -c } \Bigr\} .
\eeq
In view of Remark \ref{rem-Ray}, $Ray_\alpha(\psi_1)$ and $U'' \psi_1$ are of order $O(\alpha)$ in $X^\eta$. We thus have
$$
\eps \alpha^2 \Bigl| {Ray(\psi_1) - U'' \psi_1 \over U -c } \Bigr|
\le C {\eps \alpha^2 \over | z - z_c | } 
\le C {\eps \alpha^2 \over | \Im c | } \le C \eps \alpha = O(\nu)_{X^\eta}.
$$
Next we expand $\partial_z^2$ in (\ref{errr}) which gives three terms. The first one is
$$
\eps {\partial_z^2 Ray(\psi_1) - \partial_z^2(U'' \psi_1) \over U - c} .
$$
As $Ray_\alpha(\psi_1)$ and $\psi_1$ are of order $O(\alpha)$ in $Y^\eta$, this quantity is bounded by
\beq \label{bound1}
C \eps \Bigl( 1 + {\alpha | \log \Im c | \over |z - z_c|}+ {\alpha  \over |z - z_c|^2} \Bigr) \le C {\eps \alpha  \over | \Im c|^2} = O(\alpha^2 ). 
\eeq
The third term in the expansion of (\ref{errr}) is
$$
\eps \Bigl[  Ray(\psi_1) - U'' \psi_1 \Bigr] (z - z_c)^{-3}
$$
which is bounded by $O(\alpha)$. 
Thus, we can write the error term as 
$$ E = E_1 + E_2, \qquad E_1 = O(\alpha^2), \qquad E_2\le C \eps \alpha | z - z_c |^{-3} .
$$
This error term $E_2$ is therefore too large for our purposes. However, it is located near $z = z_c$, namely in the critical
layer.  We therefore correct $\psi_0 + \psi_1$ by $\psi_2$ by approximately inverting the Airy operator in this layer. 
More precisely, let
$$
\psi_2 = - AirySolve(E_2) ,
$$
which will create an error term
$$
\begin{aligned}
E_3 &=  \OS_{\alpha,c} (\psi_2) + E_2 
\\& = \Airy (\psi_2) + \hbox{OrrAiry} (\psi_2) + E_2
\\&=  \hbox{OrrAiry} (\psi_2) + ErrorAiry(E_2).
\end{aligned}$$
Let us now bound $\psi_2$. Using \eqref{AirySolve}, we have
$$
| \psi_2 (y) |\le C \eps \alpha \int_0^{+ \infty} | x - z_c |^{-3}
 \Bigl(e^{ - | \int_x^y \Re \mu_f(z)dz |}   + O(\nu^{-1/4}) 1_{y < x} x \Bigr) dx .
$$
Writing $1_{y < x} x = 1_{y < x} (x - z_c) + 1_{y < x} z_c$, we thus have 
$$
\begin{aligned}
| \psi_2 (y) | &\le C \eps \alpha \int_0^{+ \infty} \Bigl ( | x - z_c |^{-3} + \nu^{-1/4} |x-z_c|^{-2}\Bigr )
 dx 
 \\
 &\le C \eps \alpha \Bigl( |\Im c|^{-2}+ \nu^{-1/4} |\Im c|^{-1}\Bigr )  = O( \alpha^2)
 .\end{aligned}
$$
This together with \eqref{Airy-de-2} yields $ \hbox{OrrAiry} (\psi_2) = O( \alpha^2)$.  Similarly, using (\ref{GreenAiry2}), we get 
$$
ErrorAiry(E_2)(z) \le C \eps \alpha\int_0^{+ \infty} | x - z_c |^{-3} O(\nu^{1/4}) dx = O(\alpha^3).
$$
Therefore, we have
$$
\OS_{\alpha,c}(\psi_0 + \psi_1 + \psi_2) = O(\alpha^2) .
$$
We define
$$
\phi_{s,-}^{app} = \psi_0 + \psi_1 + \psi_2 .
$$
To end this section we compute $\psi(0)$. By definition,
$$
\begin{aligned}
\psi_1(0) &= - RaySolver_\alpha(e_0) (0)
 = - \phi_{2,\alpha}(0) \int_0^{+\infty} e^{2\alpha x}\phi_{1,\alpha}(x) {e_0(x) \over U(x) - c} dx 
\\&=  -  2 \alpha \phi_{2,0}(0) \int_0^{+\infty}  U' (U-c)dz =
 \alpha  \phi_{2,0}(0) \Bigl[ (U - c)^2 \Bigr]_0^{+ \infty}
\\&= - \alpha \phi_{2,0}(0) \Bigl[ (U_+ - c)^2 - c^2 \Bigr]  
=   \alpha {U_+  \over U'(0)} (U_+ - 2 c) .
\end{aligned}$$
From the definition, we have 
$$
\phi_{s,-}(0) = U_0 -c + \psi_1(0) + O(\alpha^2).
$$
This proves the lemma,  using that $U_0 - c = O(z_c)$.


\subsection{Construction of $\phi^{app}_{s,+}$ \label{sec35}}


We first start with $\phi_{2,\alpha} = \phi_{2,0} e^{- \alpha z}$, which is an approximate solution of Rayleigh equation,
up to a $O(\alpha)$ error term.
Let 
$$
e_1 = Ray_\alpha (\phi_{2,\alpha}) = O(\alpha).
$$
We introduce
$$
\phi_3 = - RaySolver_\alpha(e_1) .
$$
Then 
$$
Ray_\alpha( \phi_{2,\alpha} + \phi_3) = -Err_{R,\alpha}(e_1) = O(\alpha^2).
$$
Let
$$
\phi_{s,+} = \phi_{2,\alpha} + \phi_3.
$$
Note that $\phi_{s,+}$ is bounded in $Y^\eta$, and thus behaves like $(z - z_c) \log (z - z_c)$ near $z_c$


We have
$$
\OS_{\alpha,c}(\phi_{s,+}) = - \eps (\partial_z^2 - \alpha^2)^2 \phi_{s,+},
$$
but, using Rayleigh equation,
$$
(\partial_z^2- \alpha^2) \phi_{s,+} = {U'' \over U - c} \phi_{s,+},
$$
hence
$$
(\partial_z^2- \alpha^2)^2 \phi_{s,+} = (\partial_z^2- \alpha^2) \Bigl( {U'' \over U - c} \phi_{s,+} \Bigr).
$$
The worst term in the right hand side is
$$
\Bigl[ \partial_z^2 \Bigl( { U'' \over U - c} \Bigr) \Bigr] \phi_{s,+} 
$$
which is of order $(\Im z_c)^{-3}$ for $\phi_{s,-}$. Hence $\OS_{\alpha,c}(\phi_{s,+})$ is of order
$$
{\eps \over (\Im z_c)^3}
\sim {\nu \over \alpha} {1 \over \nu^{3/4}} \sim 1 
$$
near $z = z_c$, which is $\alpha^{-1}$ larger than for $\phi_{s,-}^{app}$. 
As a consequence we loose a factor $\alpha^{-1}$ in the end of the construction with respect to $\phi_{s,-}^{app}$.
The construction is similar, up to a factor $\alpha^{-1}$.


\section{Green function for Orr-Sommerfeld equations}


We now construct an approximate Green function $G^{app}$ using the approximate solutions
$\phi_{s,\pm}^{app}$ and $\phi_{f,\pm}^{app}$. We will decompose this Green function into two components
$$
G^{app} = G_i^{app} + G_b^{app}
$$
where $G_i^{app}$ takes care of the source term $\delta_x$ and where $G_b^{app}$ takes care of the boundary conditions.


\subsection{Interior approximate Green function}


We look for $G_i^{app}(x,y)$ of the form
$$
G_i^{app}(x,y) = a_+(x)  \phi_{s,+}^{app}(y) 
+ {b_+(x) }  {\phi_{f,+}^{app}(y) \over \phi_{f,+}^{app}(x)} 
\quad \hbox{for} \quad y < x,
$$
$$
G_i^{app}(x,y) = a_-(x)  \phi_{s,-}^{app}(y)
+ {b_-(x)} {\phi_{f,-}^{app}(y) \over \phi_{f,-}^{app}(x)} 
\quad \hbox{for} \quad y  > x,
$$
where $\phi_{f,\pm}^{app}(x)$ play the role of normalization constants. Let
$$
F_\pm = \phi^{app}_{f,\pm}(x) 
$$
and let 
$$
v(x) = (- a_-(x), a_+(x), - b_-(x), b_+(x) ) .
$$
By definition of a Green function, $G^{app}$, $\partial_y G^{app}$ and $\partial_y^2 G^{app}$ are continuous at $x = y$,
whereas $- \eps \partial_y^3 G^{app}$ has a unit jump at $x = y$. 
Let
\beq \label{matriceM}
M = \left( \begin{array}{cccc} 
\phi_{s,-} & \phi_{s,+} & \phi_{f,-} / F_-  & \phi_{f,+} /F_+  \cr
\partial_y \phi_{s,-}  / \mu_f & \partial_y\phi_{s,+} /   \mu_f
& \partial_y\phi_{f,-} /  F_- \mu_f & \partial_y\phi_{f,+} / F_+ \mu_f  \cr
\partial_y^2 \phi_{s,-} / \mu_f^2 &\partial_y^2 \phi_{s,+} /   \mu_f^2 
& \partial_y^2 \phi_{f,-} /  F_- \mu_f^2  & \partial_y^2 \phi_{f,+} /  F_+ \mu_f^2 \cr
 \partial_y^3 \phi_{s,-} /  \mu_f^3 &  \partial_y^3 \phi_{s,+} /   \mu_f^3
& \partial_y^3 \phi_{f,-}  /  F_- \mu_f^3 &  \partial_y^3 \phi_{f,+} /  F_+ \mu_f^3 \cr 
\end{array} \right) ,
\eeq
where the functions $\phi_{s,\pm} = \phi_{s,\pm}^{app}$ and $\phi_{f,\pm} = \phi_{f,\pm}^{app}$ 
and their derivatives are evaluated at $y=x$, and where the various factors $\mu_f$ are introduced to renormalize the lines of $M$.
Then 
\beq \label{Mv}
M v = (0,0,0,- 1/ \eps \mu_f^3) .
\eeq
We will  evaluate $M^{-1}$ using the following block structure. 
Let $A$, $B$, $C$ and $D$ be the two by two matrices defined by
$$
M = \left( \begin{array}{cc} 
A & B \cr
C & D \cr \end{array} \right) .
$$
We will prove that $C$ is small, that $D$ is invertible and that $A$ is related to Rayleigh equations.
This will allow the construction of an explicit approximate inverse, and by iteration, of the inverse of $M$. 
Let us detail these points.

Let us first study $D$. Following  (\ref{fastapp1c}), for $z \gg \nu^{1/4}$,
$$
D = \left( \begin{array}{cc}
1 & 1 \cr
-1 & 1 \cr 
\end{array} \right) + o(1),
$$
hence $D$ is invertible and
$$
D^{-1} =  \left( \begin{array}{cc}
1 & -1 \cr
1 & 1 \cr 
\end{array} \right) + o(1).
$$
For $z$ of order $\nu^{1/4}$, we note that $F_+$ and $F_-$ are of order $O(1)$,
$$
\partial_y^2 \phi_{f,-} = \gamma^2 {1 \over (g')^2(x)} {Ai ( \gamma g(x) ) \over Ai(2, \gamma g(x)) } + O(\gamma)
$$
and similarly for $\partial_y \phi_{f,-}$ and $\partial_y \phi_{f,+}$. Note that $\gamma^2 / \mu_f^2$, $\gamma^3 / \mu_f^3$,
$Ai(2,\gamma g(x))$ and $Ci(2,\gamma g(x))$ are of order $O(1)$. As $g'(z_c) = 1$, up to normalization of
lines and columns, $D$ is close to
$$
\left( \begin{array}{cc}
Ai & Ci \cr
Ai' & Ci' \cr 
\end{array} \right) 
$$
which is invertible by definition of the special Airy functions $Ai$ and $Ci$.

Let us turn to $C$. The worst term in $C$ is those involving $\phi_{s,+}$ because of its logarithmic singularity.
More precisely, $\partial_y^k \phi_{s,+}$ behaves like $(z-z_c)^{k-1}$ and
is bounded by $| \Im c |^{k-1} \sim \nu^{(1 - k) / 4}$ for $k = 2$, $3$.
Hence, as $\mu_f^{-1} = O(\nu^{1/4})$, 
$$
C =  \left( \begin{array}{cc}
O(\nu^{1/2 }) & O(\nu^{1/2 }  (z-z_c)^{-1})  \cr
O(\nu^{3/4 }) & O(\nu^{3/4 } (z - z_c)^{-2}) \cr
\end{array} \right) 
$$
Note that $A = A_1 A_2$ with
$$
A_1 = \left( \begin{array}{cc}  
1 & 0 \cr
0 & \mu_f^{-1} \cr
\end{array} \right),
\quad 
A_2 = \left( \begin{array}{cc}  
\phi_{s,-}^{app} & \phi_{s,+}^{app} \cr
\partial_y \phi_{s,-}^{app}  & \partial_y \phi_{s,+}^{app}  \cr
\end{array} \right) .
$$
We have
$$
A_2^{-1} = {1 \over \det(A_2)} \left( \begin{array}{cc}  
\partial_y \phi_{s,+}^{app}  & - \phi_{s,+}^{app} \cr
- \partial_y \phi_{s,-}^{app}  & \phi_{s,-}^{app}  \cr
\end{array} \right) .
$$
The determinant  $A_2$ is the Wronskian of $\phi_{s,\pm}^{app}$ and hence a perturbation
of the Wronskian of $\phi_{1,\alpha}$ and $\phi_{2,\alpha}$ which equals to $e^{- \alpha x}$.
We distinguish between $x < \alpha^{1/2}$ and $x > \alpha^{1/2}$. In the second case, $\OS_{c,\alpha}$
is a small perturbation of a constant coefficient fourth order operator. The Green function may therefore be explicitly computed.
We will not detail the computations here and focus on the case where $x < \alpha^{1/2}$. In this case the Wronskian is of order $O(1)$.
As a consequence 
$$
A_2^{-1} = \left( \begin{array}{cc}  
O(\log | z - z_c | )  & O(1) \cr
O(1)  & O(z - z_c)  \cr
\end{array} \right) 
$$
and
$$
A^{-1} = \left( \begin{array}{cc}  
O(\log | z - z_c |)  & O(\mu_f) \cr
O(1)  & O(\mu_f (z - z_c))  \cr
\end{array} \right) 
$$
We now observe that the matrix $M$ has an approximate inverse
$$
\widetilde M = \left( \begin{array}{cc}
A^{-1} & - A^{-1} B D^{-1}  \cr
0 & D^{-1} \cr 
\end{array} \right) 
$$
in the sense that $M  \widetilde M = Id + N$ where
$$
N =  \left( \begin{array}{cc}
0 & 0 \cr 
 C A^{-1} &  - C A^{-1} B D^{-1} \cr 
\end{array} \right) .
$$
Now a direct calculation shows that 
$$
C A^{-1} = O(\nu^{1/4})
$$ 
since $\Im z_c = O(\nu^{1/4})$. 
As $D^{-1}$ and $B$ are uniformly bounded,
$N = O(\nu^{1/4})$. In particular, $(Id + N)^{-1}$
is well defined and 
$$
M^{-1} = \widetilde M (Id + N)^{-1} = \widetilde M \sum_n N^n.
$$
Note that the two first lines of $N^n$ vanish. The other lines are at most of order $O(\nu^{1/4})$.  Therefore
$$
(Id + N)^{-1} (0,0,0, 1 / \nu \mu_f^3) = \Bigl( 0, 0, O(1 / \nu \mu_f^4), 1 / \nu \mu_f^3 \Bigr) .
$$
As $D^{-1}$ is bounded and $A^{-1} B D^{-1}$ is of order $O(\mu_f)$, we obtain that 
$a_\pm$ and $b_\pm$ are respectively bounded by $C / \nu \mu_f^2$ and $C / \nu \mu_f^3$.


\subsection{Boundary approximate Green function}


We now add to $G_i^{app}$ another Green function $G_b^{app}$ to handle the boundary conditions.
We look for $G_b^{app}$ under the form
$$
G_b^{app}(y) = d_s \phi_{s,-}(y)  + d_f {\phi_{f,-}(y)  \over \phi_{f,-}(0)},
$$
where $\phi_{f,-}(0)$ in the denominator is a normalization constant, 
and look for $d_s$ and $d_f$ such that
\beq \label{Greenb1}
G_i^{app}(x,0) + G_b^{app}(0) = 
\partial_y G_i^{app}(x,0) + \partial_y G_b^{app}(0) =  0.
\eeq
Let
$$
M = \left( \begin{array}{cc} \phi_{s,-} & \phi_{f,-} / \phi_{f,-}(0) \cr
\partial_y \phi_{s,-} & \partial_y \phi_{f,-} / \phi_{f,-}(0) \cr \end{array} \right) ,
$$
the functions being evaluated at $y = 0$. Then (\ref{Greenb1}) can be rewritten as
$$
M d = - (G_i^{app}(x,0), \partial_y G_i^{app}(x,0)) 
$$
where $d = (d_s,d_f)$. Note that
$$
(G_i^{app}(x,0), \partial_y G_i^{app}(x,0))  = Q (a_+,b_+)
$$
where
$$
Q =  \left( \begin{array}{cc}
 \phi_{s,+}(0)  & 1 \cr
 \partial_y \phi_{s,+}(0)   & \partial_y \phi_{f,+}(0) / \phi_{f,+}(0) \cr
\end{array} \right) 
= \left( \begin{array}{cc}
 O(1)  & 1 \cr
 O( \log( \nu))   & O(\nu^{-1/4}) \cr
\end{array} \right) .
$$
By construction
\beq \label{defid}
d = -   M^{-1} Q (a_+,b_+) .
\eeq
We have
$$
M^{-1} = {1 \over \det(M)}  \left( \begin{array}{cc} \partial_y \phi_{f,-}(0) / \phi_{f,-}(0) & - 1 \cr
- \partial_y \phi_{s,-}(0)& \phi_{s,-}(0)  \end{array} \right) .
$$
The determinant of $M$ equals
$$
\det M = {W[\phi_{s,-},\phi_{f,-}](0) \over \phi_{f,-}(0)} 
$$
and does not vanish by assumption. Therefore
$$
M^{-1} = \left( \begin{array}{cc} O(\nu^{-1/4}) & - 1 \cr
O(1) & O(\nu^{1/4}) \end{array} \right) .
$$
As a consequence,
$$
M^{-1} Q =
\left( \begin{array}{cc} O(\nu^{-1/4}) & O(\nu^{-1/4}) \cr
O(1) & O(1) \end{array} \right).
$$


\subsection{Exact Green function}


Once we have an approximate Green function, we obtain the exact Green function by iteration, following the strategy developped
in \cite{GrN1}.




\end{document}